\newtheorem{thm}[subsection]{Theorem}
\newtheorem{defn}[subsection]{Definition}
\newtheorem{prop}[subsection]{Proposition}
\newtheorem{lemma}[subsection]{Lemma}
\theoremstyle{definition}
\numberwithin{equation}{section}
\def\bra{\langle}
\def\ket{\rangle}
\def\cS{{\mathcal S}}
\def\bS{{\mathcal S_+}}
\def\cE{{\mathcal E}}
\def\cW{{\mathcal W}}
\def\bW{{\mathcal W_+ }}
\def\cA{{\cal A}}
\def\cA{{\mathcal A}}
\def\cE{{\mathcal E}}
\def\cL{{\mathcal L}}
\def\cN{{\mathcal N}}
\DeclareMathOperator{\Sym}{Sym}
\DeclareMathOperator{\End}{End}
\newfont{\german}{eufm10}
\begin{document}
\pagestyle{plain}

\title{ The global sections of chiral de Rham complexes on compact Ricci-flat K\"ahler manifolds II}

\author{Andrew R. Linshaw}
\address{Department of Mathematics, University of Denver}
\email{andrew.linshaw@du.edu}
\thanks{A. Linshaw is supported by Simons Foundation Grant \#635650 and NSF Grant DMS-2001484.}

\author{Bailin Song}
\address{School of Mathematical Sciences, University of Science and Technology of China, Hefei, Anhui 230026, P.R. China}
\email{bailinso@ustc.edu.cn}

\thanks{B. Song is supported by the National Natural Science Foundation of China No. 12171447}

\begin{abstract} We give a complete description of the vertex algebra of global sections of the chiral de Rham complex of an arbitrary compact Ricci-flat K\"ahler manifold.
\end{abstract}
	\keywords{chiral de Rham complex, Lie algebra of Cartan type, global section, Calabi-Yau manifold}
\maketitle
\section{Introduction} The chiral de Rham complex $\Omega^{\text{ch}}_X$ is a sheaf of vertex superalgebras that exists on any smooth manifold $X$ in either the algebraic, complex analytic or smooth settings. It is bigraded by degree and conformal weight, and contains the ordinary de Rham sheaf as the weight zero component. The de Rham differential extends to a square-zero differential on the entire structure which preserves conformal weight and raises the degree by one. This sheaf was introduced by Malikov, Schechtman and Vaintrob in \cite{MSV}, and has attracted significant attention in both the physics and mathematics literature. The space of global sections $\Gamma(X, \Omega^{\text{ch}}_X)$ is always a vertex superalgebra, and it is known to have extra structure when $X$ is endowed with geometric structures. For example, if $X$ has a Riemannian metric, it has an $\cN=1$ superconformal structure, and when $X$ is K\"ahler or hyperk\"ahler this is enhanced to an $\cN=2$ structure and $\cN=4$ structure, respectively \cite{BHS}. In these cases, certain covariantly closed differential forms on $X$ also give rise to fields in $\Gamma(X, \Omega^{\text{ch}}_X)$. For example, when $X$ is Calabi-Yau it was shown in \cite{EHKZ} that $\Gamma(X, \Omega^{\text{ch}}_X)$ contains a subalgebra generated by $8$ fields that was introduced by Odake \cite{O}.

Until recently, a complete description of $\Gamma(X, \Omega^{\text{ch}}_X)$ was not known in any examples other than an affine space or a torus. In \cite{D}, for any congruence subgroup $G \subset SL(2,\mathbb{R})$, Dai constructed a basis of the $G$-invariant global sections of the chiral de Rham complex on the upper half plane, which are holomorphic at the cusps. The vertex operations are determined by a modification of the Rankin-Cohen brackets of modular forms. In \cite{S2}, the second author showed that for a compact Ricci-flat K\"ahler manifold with holonomy group $SU(d)$ or $Sp(\frac d 2)$, $\Gamma(X,\Omega^{\text{ch}}_X)$ is isomorphic to a certain subalgebra of the $bc\beta\gamma$-system of rank $d = \text{dim}\ X$ which is invariant under the action of an infinite-dimensional Lie algebra of Cartan type. An explicit description of this invariant space was conjectured in \cite{S2} and this conjecture was proven in the case $d = 2$ using results on the invariant theory of arc spaces developed in \cite{LSSI}. This allowed a complete description of $\Gamma(X, \Omega^{\text{ch}}_X)$ for all K3 surfaces; it is isomorphic to the simple (small) $\cN=4$ superconformal algebra with central charge $c = 6$ \cite{S,S1}.

Very recently, in a series of papers \cite{LSI,LSII,LSIII} we have proven the arc space analogues of the first and second fundamental theorems of invariant theory for the general linear, special linear, and symplectic groups. This was achieved by providing a standard monomial basis for these invariant spaces that extends the standard monomial basis in the classical setting. These results provide the needed ingredients to complete the description of $\Gamma(X, \Omega^{\text{ch}}_X)$ for a general compact Ricci-flat K\"ahler manifold. Unfortunately, this approach does not generalize to K\"ahler manifolds which are not Ricci-flat, since there is no method to describe the global sections of tensor powers of the tangent and cotangent bundles. In general, $\Gamma(X, \Omega^{\text{ch}}_X)$ need not be isomorphic to a subalgebra of a free field algebra which is invariant under a Lie algebra of Cartan type.

The plan of the paper is following. In Section \ref{sec:two}, we introduce the $\beta\gamma-bc$ system. In Section \ref{sec:2.5}, we introduce the Lie algebras of Cartan type and their actions on the $\beta\gamma-bc$ systems. In Section \ref{sec:foure}, we calculate the subspaces of invariant elements in $\beta\gamma-bc$ systems under the action of special series and Hamiltonian series of Lie algebras of Cartan type, by reducing this to the invariant theory of arc spaces. Finally, in Section \ref{sec:three}, we calculate the space of global sections of the chiral de Rham complexes on compact Ricci-flat K\"ahler manifolds.

\section{$\beta\gamma-bc$ system}\label{sec:two}
\subsection{Vertex algebras}
In this paper, we will follow the formalism of vertex algebras developed in \cite{Kac}. A vertex
algebra is the data $(\cA, Y, L_{-1}, 1)$. In this notation,
\begin{enumerate} 
\item $\cA$ is a $\mathbb Z_2$-graded vector space over $\mathbb C$. The $\mathbb Z_2$-grading is called parity, and $|a|$ denotes the parity of a homogeneous element $ a \in \cA$. 
\item $Y$ is an even linear map
$$Y : \cA \to \End (\cA) [[z, z^{-1}]],\qquad Y(a) = a(z) = \sum_{n\in \mathbb Z}a_{(n)}z^{-n-1}.$$ Here $z$ is a formal variable and $a(z)$ is called the field corresponding to $a$.  
\item $1 \in \cA$ is called the vacuum vector. 
\item $L_{-1}$ is an even endomorphism of $\cA$.
\end{enumerate}
They
satisfy the following axioms:
\begin{itemize}
	\item
	\textsl{Vacuum axiom.} $L_{-1}1 =0$; $1(z)= Id$; for $a\in \cA$, $n\geq 0$, $a_{(n)}1=0$ and $a_{(-1)}1=a$;
	\item \textsl{Translation invariance axiom.} For $a\in \cA$, $[L_{-1}, Y(a)] =\partial a(z)$;
	\item \textsl{Locality axiom.} Let $z,w$ be formal variables. For homogeneous $a, b \in \cA$, $(z-w)^k [a(z), b(w)]=0$ for some $k\geq 0$, where $[a(z), b(w)] = a(z) b(w) - (-1)^{|a| |b|} b(w) a(z)$.
\end{itemize}
For $a,b\in \cA$, $n\in \mathbb Z_{\geq 0}$, $a_{(n)}b$ is their $n^{\text{th}}$ product and their operator product expansion (OPE) is
$$a(z)b(w)\sim \sum_{n\geq 0} (a_{(n)}b)(w)(z-w)^{-n-1}.$$
The Wick product of $a(z)$ and $b(z)$ is
$:a(z)b(z):  \ =(a_{(-1)}b)(z)$. The other negative products are given by
$$:\partial^na(z)b(z):\ =n!(a_{(-n-1)}b)(z).$$
For $a_1, \dots , a_k\in \cA$, their iterated Wick product is defined to be
$$:a_1(z)\cdots a_k(z):\ =\ :a_1(z)b(z):,\quad \quad b(z)=\ :a_2(z)\cdots a_k(z):.$$
We often omit the formal variable $z$ when no confusion can arise.

We say that $\cA$ is generated by a subset $\{\alpha^i|\ i\in I\}$ if $\cA$ is spanned by all words in the letters $\alpha^i$, and all products, for $i\in I$ and $n\in\mathbb{Z}$. We say that $\cA$ is strongly generated by $\{\alpha^i|\ i\in I\}$ if $\cA$ is spanned by words in the letters $\alpha^i$, and all products for $n<0$. Equivalently, $\cA$ is spanned by the monomials $$\{ :\partial^{k_1} \alpha^{i_1}\cdots \partial^{k_m} \alpha^{i_m}:| \ i_1,\dots,i_m \in I,\ k_1,\dots,k_m \geq 0\}.$$

 For $a,b\in \cA$, the following identities will be frequently used.
\begin{eqnarray}\label{eq:circp}
	:ab:_{(n)}&=&\sum_{k<0}a_{(k)}b_{(n-k-1)}+(-1)^{|a||b|}\sum_{k\geq 0}b_{(n-k-1)}a_{(k)},\\
	\label{eq:circpcom} a_{(n)}b&=&\sum_{k\in \mathbb Z}(-1)^{k+1}(-1)^{|a||b|}(b_{(k)}a)_{(n-k-1)}1.
\end{eqnarray}

\subsection{$\beta\gamma-bc$ system}
Let $V$ be a $d$-dimensional complex vector space.
The $\beta\gamma$-system  $\cS(V)$ and $bc$-system $\cE(V)$ were introduced in \cite{FMS}.
The $\beta\gamma$-system  $\cS(V)$ is strongly generated by even elements $\beta^{x'}(z)$, $x' \in V$ and
$\gamma^{x}(z),x\in V^*$. The nontrivial OPEs among these generators are
$$ \beta^{x'}(z)\gamma^{x}(w)\sim {\bra x,x'\ket}{(z-w)}^{-1}.$$
The $bc$-system $\cE(V)$ is strongly generated by odd elements $b^{x'}(z)$, $x'\in V$ and
$c^x(z),x\in V^*$. The nontrivial OPEs among these generators are
$$ b^{x'}(z)c^{x}(w)\sim {\bra x,x'\ket}{(z-w)}^{-1}.$$
Here for $P=\beta,\gamma,b$ or $c$, we assume $a_1 P^{x_1}+a_2P^{x_2}=P^{a_1x_1+a_2x_2}$.

Let $$\cW(V):=\cS(V)\otimes\cE(V).$$
Let $\alpha^{x}=\partial \gamma^{x}$. Then $\beta^{x'}$ and $\alpha^{x}$ satisfy
$$ \beta^{x'}(z)\alpha^{x}(w)\sim {\bra x,x'\ket}{(z-w)}^{-2}.$$
Let $\bS (V)$ be the subalgebra of $\cS(V)$ generated by $\beta^{x'}$ and $\alpha^{x}$, so that $\bS(V)$ is a system of $2d$ free bosons. Let
$$\bW(V):=\bS(V)\otimes \cE(V).$$

If $V'$ is a vector space and $\psi:V\to V'$ is a linear isomorphism, let 
$\psi^*: V'^*\to V^*$ be the induced map on dual spaces. Then $\psi$ induces an isomorphism of vertex algebras 
\begin{equation} \begin{split} & \cW(\psi):\cW(V)\to \cW(V'),
\\ & \beta^{x'}\mapsto \beta^{\psi(x')},\qquad b^{x'}\mapsto b^{\psi(x')},\qquad \gamma^x\mapsto \gamma^{(\psi^*)^{-1}(x)},\qquad c^x\mapsto c^{(\psi^*)^{-1}(x)}.\end{split} \end{equation}
Note that $\cW(\psi)$ restricts to an isomorphism $\cW_+(V)\cong \cW_+(V')$. 

Fix $x'_1,\dots, x'_d$, a basis of $V$ and let $x_1,\dots, x_d$ be the dual basis of $V^*$.
Let $S_0$ be the set of $\beta_{(n)}^{x'_i},\alpha^{x_i}_{(n)}, b_{(n)}^{x'_i}, c_{(n)}^{x_i}$, $1\leq i\leq d$, $n< 0$.
These operators are supercommutative.
Let $SW(V)=\mathbb C[S_0]$ be the algebra generated by these operators. There is a canonical isomorphism of $SW(V)\otimes_{\mathbb C}\mathbb C[\gamma^{x_1}_{(-1)}] $ modules,
\begin{equation*} \tilde \pi: SW(V)\otimes_{\mathbb C}\mathbb C[\gamma^{x_1}_{(-1)},\dots, \gamma^{x_d}_{(-1)}]  \to  \mathcal W(V), \qquad a \otimes f \mapsto af1. \end{equation*}
In particular, $\cW(V)$ is a free $\mathbb C[\gamma^{x_1}_{(-1)},\dots, \gamma^{x_d}_{(-1)}]$-module.
Restricting $\tilde \pi$ to $SW(V)\otimes \{1\}$, we get an isomorphism of $SW(V)$ modules,
\begin{equation}\label{eqn:isopi}
	\pi:SW(V)\to \bW(V),\quad a\mapsto a 1.
\end{equation}

\subsection{Subalgebras of $\cW_+(V)$}
Let
\begin{equation} \label{eqn:globlesection0}
\begin{split} & Q(z)= \sum_{i=1}^d:\beta^{x'_i}(z)c^{x_i}(z):, \qquad L(z)=\sum_{i=1}^d(:\beta^{x'_i}(z)\partial\gamma^{x_i}(z):-:b^{x'_i}(z)\partial c^{x_i}(z):),
\\ & J(z)=-\sum_{i=1}^d:b^{x'_i}(z)c^{x_i}(z):, \qquad G(z)=\sum_{i=1}^d:b^{x'_i}(z)\partial\gamma^{x_i}(z):,\end{split} \end{equation}
Note that $L$ is a Virasoro field in $\cW(V)$ of central charge zero, and $b^{x'_i}$, $c^{x_i}$, $\beta^{x'_i}$, $\gamma^{x'_i}$ are primary of weights $1,0,1,0$ with respect to $L$. Also, $J$ generates a Heisenberg algebra and the zero mode $J_{(0)}$ induces an additional $\mathbb{Z}$-grading called the degree; note that $b^{x'_i}$, $c^{x_i}$, $\beta^{x'_i}$, $\gamma^{x'_i}$ have degrees $-1,1,0,0$. Finally, we recall that $L$ can be replaced with the Virasoro field $T = L - \frac{1}{2} \partial J$. This has central charge $c = 3d$, and $b^{x'_i}$, $c^{x_i}$, $\beta^{x'_i}$, $\gamma^{x'_i}$ are primary of weights $\frac{1}{2},\frac{1}{2},1,0$ with respect to $T$. The subalgebra of $\cW_+(V)$ generated by $Q, T,J, G$ (equivalently, $Q, L, J, G$) is isomorphic to the $\cN = 2$ superconformal algebra with central charge $c = 3d$.

Next, let
\begin{equation} \label{eqn:globlesection1}
\begin{split} & D(z)= \ : b^{x'_1}(z)b^{x'_2}(z)\cdots b^{x'_d}(z):, \qquad E(z)= \ :c^{x_1}(z)c^{x_2}(z)\cdots c^{x_d}(z):,
\\ & B(z)=Q(z)_{(0)}D(z), \qquad \qquad \qquad \quad C(z)=G(z)_{(0)}E(z).\end{split} \end{equation}

If $d=2l$ is even, let 
\begin{equation} \label{eqn:globlesection2}
\begin{split} & D'(z)= \sum_{i=1}^l : b^{x'_{2i-1}}(z)b^{x'_{2i}}(z):,\qquad E'(z)=\sum_{i=1}^l : c^{x_{2i-1}}(z)c^{x_{2i}}(z):,
\\ & B'(z)=Q(z)_{(0)}D'(z),\quad\quad\quad\quad\quad \quad C'(z)=G(z)_{(0)}E'(z).\end{split} \end{equation}
\begin{defn} Let $\cA_0(V)$  be the vertex algebra generated by the fields \eqref{eqn:globlesection0} and \eqref{eqn:globlesection1}. Let $\cA_1(V)$ be the vertex algebra generated by the fields \eqref{eqn:globlesection0} and \eqref{eqn:globlesection2}.\end{defn}
The algebra $\cA_0(V)$ was introduced by Odake in \cite{O} and was studied extensively in the case $d=3$. It is easy to verify that the fields  \eqref{eqn:globlesection0} and \eqref{eqn:globlesection1} strongly generate $\cA_0(V)$. Similarly, $\cA_1(V)$ is strongly generated by the fields \eqref{eqn:globlesection0} and \eqref{eqn:globlesection2}, and is isomorphic to the simple small $\mathcal N =4$ superconformal vertex algebra with central charge $c=3d$. In \cite{S2}, we have shown that $\bW(V)$ is a unitary representation of $\cA_0(V)$ and $\cA_1(V)$.

\section{Lie algebras of Cartan type and their action on $\beta\gamma-bc$ system}
\label{sec:2.5}
\subsection{Lie algebras of Cartan type}
The space of algebraic vector fields on $V$ is  a graded Lie algebra $$\text{Vect}(V)=\oplus_{n\geq -1}\text{Vect}_n(V),\quad \text{Vect}_n(V)=\Sym^{n+1}(V^*)\otimes V.$$
If $x_1,\dots, x_d$ is a basis of $V^*$, then any element $v\in \text{Vect}_n(V)$ can be written as $v=\sum_{i=1}^d P_i\frac{\partial}{\partial x_i}$, where $P_i$ is a homogeneous
polynomial of degree $n+1$. For
$\sum_{i=1}^d P_i\frac{\partial}{\partial x_i}\in \text{Vect}_n(V, \omega_0)$ and $\sum_{j=1}^d P'_j\frac{\partial}{\partial x_j}\in \text{Vect}_m(V)$,
$$[\sum_{i=1}^d P_i\frac{\partial}{\partial x_i},\sum_{j=1}^d P'_j\frac{\partial}{\partial x_j}]
= \sum_{i,j}( P_i\frac{\partial P'_j}{\partial x_i}\frac{\partial}{\partial x_j} -P'_j\frac{\partial P_i}{\partial x_j}\frac{\partial}{\partial x_i} )\in \text{Vect}_{n+m}(V).$$
This Lie algebra is called the \textsl{general series}.
For a $k$-form $\omega\in \wedge^kV^*$, let
\begin{eqnarray*}
	\text{Vect}_n(V,\omega)&=&\{v\in \text{Vect}_n(V)|L_v \omega=0\},\\
	\text{Vect}(V,\omega)&=&\bigoplus_{n\geq -1}\text{Vect}_n(V,\omega).
\end{eqnarray*}
Here $L_v$ is the Lie derivative of $v$. Note that $\text{Vect}(V,\omega)$ is a graded Lie subalgebra of $\text{Vect}(V)$. We now consider $\text{Vect}(V,\omega)$ for some particular choices of $\omega$.

\begin{enumerate}
\item If $\omega_0=dx_1\wedge\cdots\wedge dx_d$,
$$ \text{Vect}_n(V,\omega_0)
=\{\sum_{i=1}^d P_i\frac{\partial}{\partial x_i}\in \text{Vect}_n(V)|\sum \frac{\partial}{\partial x_i}P_i=0\}.$$ The Lie algebra  $\text{Vect}(V,\omega_0)$ is called the \textsl{special series}. $\text{Vect}_0(V,\omega_0)$ is a Lie algebra isomorphic to $\mathfrak{sl}_d(\mathbb C)$.
\item If $d=2l$ is even and $\omega_1=\sum_{i=1}^ldx_{2i-1}\wedge dx_{2i}$.
The Lie algebra $\text{Vect}(V,\omega_1)$ is called the \textsl{Hamiltonian series}, and $\text{Vect}_0(V,\omega_1)$ is a Lie algebra isomorphic to $\mathfrak{sp}_d(\mathbb C)$.
\item If  $d=2l+1$ and $\omega = dx_{2l+1}+\sum_{i=1}^l( x_{l+i}dx_i-x_idx_{l+i})$. The Lie algebra
$$\{v\in \text{Vect}(V)|L_v\omega=P\omega, P\in \Sym^*( V^*)\}$$ is called the \textsl{contact series}.
\end{enumerate}

The general series, special series, Hamiltonian series and contact series are called the Lie algebras of
Cartan type and constitute an important class of simple infinite dimensional Lie algebras. In this paper, we consider the
special series and Hamiltonian series. 

\subsection{The actions of Lie algebras of Cartan type on $\beta\gamma-bc$ systems}
$\text{Vect}(V)$ has a canonical action on $\mathcal W(V)$ according to the Part III of \cite{MS}. Let  $\mathcal L: \text{Vect}(V)\to \text{Der}(\cW(V))$ be the map given by
\begin{equation}\label{eqn:actionL}
	\mathcal L(\sum_iP_i(x_1,\dots, x_d)\frac{\partial}{\partial x_i})= \sum_i(Q_{(0)} :P_i(\gamma^{x_1},\dots, \gamma^{x_d})b^{x_i'}:)_{(0)}.
\end{equation}
Clearly $\mathcal L$ is a homomorphism of Lie algebras.

\section{$\text{Vect}(V, \omega_i)$-invariants}\label{sec:foure}
For $R\subset \cW(V)$, let $$R^{\text{Vect}(V, \omega_i)}=\{a\in R\ | \mathcal L(g) a=0, \text{for any }g\in \text{Vect}(V, \omega_i)\}$$ be the space of $\text{Vect}(V, \omega_i)$-invariants. In \cite{S2}, the second author has shown that 
\begin{lemma}\label{lem:Linvariant}
$\cA_0(V)\subset \cW(V)^{\text{Vect}(V, \omega_0)}$ and  $\cA_1(V)\subset \cW(V)^{\text{Vect}(V, \omega_1)}$.
\end{lemma}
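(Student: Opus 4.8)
The plan is to verify the two inclusions by a direct computation showing that the generating fields of $\cA_0(V)$ and $\cA_1(V)$ are annihilated by the action $\mathcal L(g)$ for every $g\in\text{Vect}(V,\omega_i)$. Since $\mathcal L(g)$ acts as a derivation of all vertex algebra products (it lands in $\text{Der}(\cW(V))$), and since the normally-ordered products and $n$-th products that generate these subalgebras are built functorially from the generators, it suffices to check annihilation on the strong generators \eqref{eqn:globlesection0}, \eqref{eqn:globlesection1} in the first case and \eqref{eqn:globlesection0}, \eqref{eqn:globlesection2} in the second. In fact, because $\text{Vect}(V,\omega_i)$ is graded and generated in low degrees by its homogeneous pieces, I would reduce the check to generators of the Lie algebra, exploiting that $\mathcal L$ is a Lie algebra homomorphism so that annihilation by a generating set of $\text{Vect}(V,\omega_i)$ propagates to the whole algebra.

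First I would unwind the formula \eqref{eqn:actionL} on the basic generators $\beta^{x'},\ \alpha^x=\partial\gamma^x,\ b^{x'},\ c^x$, recording how $\mathcal L(P\,\partial/\partial x_i)$ transforms each; this is the essential input, and it says that $\mathcal L(g)$ acts geometrically, moving $\gamma$ and $b$ like coordinates and vector fields and moving $\beta$ and $c$ dually. Next I would observe that the four fields $Q,L,J,G$ in \eqref{eqn:globlesection0} are the standard $\cN=2$ currents and that their invariance is essentially the statement that they are the natural (coordinate-free) fields attached to the de Rham differential, the fermion number, the Euler field, and the contraction; since these are defined without reference to a choice of basis, they must be preserved by any $\mathcal L(g)$ coming from a vector field, and in particular by the special and Hamiltonian subalgebras. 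I would make this precise using the isomorphism $\cW(\psi)$: a vector field $g$ integrates (formally) to a flow by automorphisms $\cW(\psi_t)$ preserving the relevant volume form $\omega_0$ or symplectic form $\omega_1$, and $Q,L,J,G$ are manifestly invariant under all such $\cW(\psi)$, hence under the infinitesimal generator $\mathcal L(g)$.

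For the remaining generators the computation is more delicate. The fields $D,E$ in \eqref{eqn:globlesection1} are the top fermionic densities $b^{x'_1}\cdots b^{x'_d}$ and $c^{x_1}\cdots c^{x_d}$, which transform under a coordinate change by the Jacobian determinant; they are therefore invariant exactly under the subalgebra preserving the volume form $\omega_0$, which is precisely $\text{Vect}(V,\omega_0)$. Dually, $D',E'$ in \eqref{eqn:globlesection2} are the fermionic analogues of the symplectic two-form and its dual bivector, invariant under the flows preserving $\omega_1$, i.e. under $\text{Vect}(V,\omega_1)$. The fields $B,C$ and $B',C'$ are then obtained by applying $Q_{(0)}$ and $G_{(0)}$, which commute with $\mathcal L(g)$ (because $Q,G$ are themselves invariant and $\mathcal L(g)$ is a derivation of $n$-th products), so their invariance follows immediately once $D,E,D',E'$ are handled.

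The main obstacle I expect is the infinitesimal verification that $D,E$ (and $D',E'$) are annihilated by $\mathcal L(g)$ for $g$ of positive degree in $\text{Vect}(V,\omega_i)$: while the degree-zero part corresponds to the finite-dimensional $\mathfrak{sl}_d$ or $\mathfrak{sp}_d$ action where invariance is classical, the higher graded pieces involve $\gamma$-dependent coefficients, so $\mathcal L(g)$ produces normally-ordered corrections beyond the naive classical transformation. Controlling these quantum corrections — showing that the extra terms generated by the Wick contractions in \eqref{eqn:actionL} cancel precisely when the divergence (for $\omega_0$) or the symplectic Lie-derivative condition (for $\omega_1$) vanishes — is the technical heart of the argument. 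Since this lemma is attributed to \cite{S2}, I would present the conceptual flow-invariance argument as the main line and relegate the graded-component cancellation to a direct OPE computation, invoking the defining conditions $\sum_i\partial P_i/\partial x_i=0$ and $L_v\omega_1=0$ at exactly the step where the anomalous terms must drop out.
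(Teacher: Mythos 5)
First, a point of comparison: the paper does not actually prove this lemma --- it is quoted from \cite{S2} --- so there is no internal argument to measure your attempt against, and your proposal must stand on its own. It does not, because one step is genuinely false. You claim that $Q,L,J,G$ are invariant under $\mathcal L(g)$ for \emph{every} vector field $g$ on the grounds that they are ``defined without reference to a choice of basis,'' and you propose to make this precise by integrating $g$ to automorphisms $\cW(\psi_t)$. Basis-independence only yields invariance under the linear action of $GL(V)$; the maps $\cW(\psi)$ are defined in the paper only for \emph{linear} isomorphisms $\psi$, whereas the flow of a positive-degree vector field is nonlinear, and the induced action on $\cW(V)$ is governed by the coordinate-change formulas \eqref{chi.coo}, whose $\tilde\beta$-term carries a normally-ordered quantum correction. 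Under such nonlinear changes $Q$ and $J$ are \emph{not} invariant: their variation is a total derivative involving the divergence of $g$, which vanishes only when $g$ is divergence-free (resp.\ symplectic). This is exactly why Section 5 of the paper states that on a general complex manifold only $L(z)$ and $G(z)$ are globally defined, while $Q(z)$ and $J(z)$ are not (only their zero modes $Q_{(0)}$, $J_{(0)}$ are). If your naturality argument were sound, it would prove $Q,J\in \cW(V)^{\text{Vect}(V)}$, contradicting this. So the invariance of $Q$ and $J$ under $\text{Vect}(V,\omega_i)$ is not automatic: it requires precisely the same anomaly-cancellation computation that you acknowledge is needed for $D,E,D',E'$.

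Second, that computation --- which you correctly identify as the technical heart --- is never performed; you ``relegate'' it to a direct OPE computation to be invoked at the end. Since this cancellation is the entire content of the lemma, the proposal is an outline rather than a proof. The surrounding reductions are fine and worth keeping: $\mathcal L(g)$ is a derivation of all $n$-th products, so checking the strong generators suffices; the annihilator of a fixed element is a Lie subalgebra, so checking a generating set of $\text{Vect}(V,\omega_i)$ suffices; and $B=Q_{(0)}D$, $C=G_{(0)}E$ (and likewise $B',C'$) inherit invariance from $Q,G,D,E$ by the derivation property. What remains, and what must actually be done, is to apply \eqref{eqn:actionL} to $Q$, $J$, $D$, $E$ (resp.\ $D'$, $E'$) for homogeneous $g=\sum_i P_i\,\partial/\partial x_i$ of positive degree, expand using \eqref{eq:circp}--\eqref{eq:circpcom}, and exhibit the cancellation of the anomalous terms using $\sum_i \partial P_i/\partial x_i=0$ (resp.\ $L_g\,\omega_1=0$).
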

\begin{thm} If $d = \dim V=2$, $\cW(V)^{\text{Vect}(V, \omega_0)}=\cA_0(V).$
\end{thm}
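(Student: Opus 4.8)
The plan is to prove the reverse inclusion $\cW(V)^{\text{Vect}(V,\omega_0)}\subset\cA_0(V)$ in the case $d=2$, since Lemma~\ref{lem:Linvariant} already gives $\cA_0(V)\subset\cW(V)^{\text{Vect}(V,\omega_0)}$. My strategy is to transport the problem across the isomorphism $\pi:SW(V)\to\bW(V)$ of \eqref{eqn:isopi} so that the vertex-algebraic invariant-computation becomes a question about invariants in a commutative (super)polynomial ring, which is exactly the setting of the arc-space invariant theory developed in \cite{LSI,LSII,LSIII}. Concretely, I would first analyze the action of $\mathcal{L}$ restricted to the finite-dimensional piece $\text{Vect}_0(V,\omega_0)\cong\mathfrak{sl}_2(\mathbb C)$ together with the generator coming from $\text{Vect}_{-1}(V)$ (translations) and the lowest positive-degree piece $\text{Vect}_1(V,\omega_0)$; since $\text{Vect}(V,\omega_0)$ is generated as a Lie algebra by its pieces in degrees $-1,0,1$, being annihilated by $\mathcal L(g)$ for $g$ in these degrees is equivalent to full $\text{Vect}(V,\omega_0)$-invariance.

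Next I would make the $\mathfrak{sl}_2$-invariance explicit. The operators $\beta^{x'_i}_{(n)}$, $\alpha^{x_i}_{(n)}$, $b^{x'_i}_{(n)}$, $c^{x_i}_{(n)}$ generating $SW(V)$ transform in the standard or dual standard representation of $\mathfrak{gl}_2$ under $\mathcal L(\text{Vect}_0(V))$, so an $\mathfrak{sl}_2$-invariant in $SW(V)$ is a polynomial in the $SL_2$-invariant contractions of these "arc-space variables." This is precisely where the first fundamental theorem for the arc space of $SL_2$ from \cite{LSII} enters: it provides a standard monomial basis for the $SL_2$-arc-invariants, built from the determinant-type and symplectic-type contractions of the jet variables. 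The determinantal invariants should correspond to the fields $D, E$ (and their derivatives), the mixed $\beta$-$\gamma$ contractions to $L$ and $J$, and the odd-even contractions to $Q, G, B, C$; so the FFT guarantees that every $\text{Vect}_0$-invariant element is a normally-ordered polynomial in derivatives of the generators \eqref{eqn:globlesection0}--\eqref{eqn:globlesection1}. Imposing in addition $\mathcal L(\text{Vect}_1(V,\omega_0))$-invariance cuts this down, via the second fundamental theorem of \cite{LSII} describing the relations among these arc-invariants, to exactly the subalgebra $\cA_0(V)$.

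I expect the main obstacle to be the careful translation between the two fundamental theorems for arc spaces and the vertex-algebra structure: I must verify that the standard monomial basis for the $SL_2$-arc-invariants matches, term by term, the span of iterated Wick products of $\partial^k$ of the eight generating fields, and that the arc-space relations correspond to the vertex-algebra relations defining $\cA_0(V)$ rather than imposing extra constraints or omitting some. A secondary technical point is handling the extra $\gamma^{x_1}_{(-1)},\dots,\gamma^{x_d}_{(-1)}$ directions in $\tilde\pi$: invariance under $\mathcal L(\text{Vect}_{-1}(V))$ forces independence from the bare $\gamma_{(-1)}$ variables, reducing the computation to $\bW(V)\cong SW(V)$, but I would need to confirm that this reduction is compatible with the $\mathfrak{sl}_2$- and $\text{Vect}_1$-invariance conditions so that no invariants are lost or spuriously created. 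Once these compatibilities are checked, the equality $\cW(V)^{\text{Vect}(V,\omega_0)}=\cA_0(V)$ follows by comparing the two standard monomial bases.
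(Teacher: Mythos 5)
Your overall skeleton (use Lemma \ref{lem:Linvariant} for one inclusion, transport the problem to $SW(V)$ via $\pi$, kill the $\gamma_{(-1)}$-dependence with $\text{Vect}_{-1}$-invariance, invoke the arc-space invariant theory of \cite{LSII,LSIII}, and exploit that $\text{Vect}(V,\omega_0)$ is generated in degrees $-1,0,1$) points in the same direction as the paper, but there is a genuine gap at the central step. You assert that $\text{Vect}_0$-invariance, i.e.\ $\mathfrak{sl}_2$-invariance, already forces an element of $SW(V)$ into the differential algebra generated by $\pi^{-1}(Q),\dots,\pi^{-1}(D)$, ``by the first fundamental theorem for the arc space of $SL_2$.'' This conflates two different invariant rings: the arc-space FFT describes invariants of $J_\infty(SL_2)$, equivalently of the current algebra $\mathfrak{sl}_2[t]$ in which $gt^n$ acts by shifting modes; it says nothing about invariants of the zero-mode copy $\mathfrak{sl}_2=\mathfrak{sl}_2t^0$ alone. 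The latter ring is far larger. For instance, $\sum_i b^{x'_i}_{(-2)}c^{x_i}_{(-1)}$ is $\mathfrak{sl}_2$-invariant (a full contraction of a standard with a dual vector), but it is not $\mathfrak{sl}_2[t]$-invariant and does not lie in the differential algebra generated by the eight elements, which in that weight contains only the symmetric combination $\partial\pi^{-1}(J)\propto\sum_i\bigl(b^{x'_i}_{(-2)}c^{x_i}_{(-1)}+b^{x'_i}_{(-1)}c^{x_i}_{(-2)}\bigr)$. So your step ``FFT $\Rightarrow$ every $\text{Vect}_0$-invariant is a normally ordered polynomial in the eight generators'' fails.

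The missing idea is exactly the bridge built in Lemma \ref{lemma: ginvariant}: for a $\text{Vect}(V,\omega_0)$-invariant $a=\pi(a_k+a_{k-1}+\cdots)$, the \emph{leading term} $a_k$ (with respect to the grading $SW(V)=\oplus_nSW_n(V)$) is invariant under all of $\mathfrak g_0[t]$, not merely under $\mathfrak g_0$. This is where $\text{Vect}_1(V,\omega_0)$ genuinely enters: applying $\mathcal L(v)$ for quadratic $v$ mixes the graded pieces, and the top component of the equation $\mathcal L(v)a=0$ produces operators $K_n$ built from $\gamma^{x_1}_{(-l-1)}g_1t^{l}$; an inductive commutator argument plus simplicity of $\mathfrak g_0$ then forces $g_1t\,a_k=0$, hence $\mathfrak g_0[t]$-invariance of $a_k$, and only then does Lemma \ref{lemma:gtinvariant} (i.e., Theorem \ref{oddgen} together with the surjectivity of \eqref{kcopiesofv} from \cite{LSII,LSIII}) apply. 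Your alternative mechanism --- that $\text{Vect}_1$-invariance ``cuts down'' the $\mathfrak{sl}_2$-invariants ``via the second fundamental theorem'' --- is not correct: the SFT describes relations among invariants and plays no such role here; in this argument it enters only indirectly, inside \cite{LSII,LSIII}, to establish the surjectivity just mentioned. Finally, even with the correct $\mathfrak g_0[t]$-statement in hand, one cannot finish by ``comparing standard monomial bases term by term,'' because $\pi$ is a linear isomorphism but not a morphism of vertex algebras: a normally ordered polynomial $b$ in the eight fields only satisfies $\pi^{-1}(b)=a_k+(\text{lower terms})$, so the conclusion must be organized as an induction on the leading term, as in the proof of Theorem \ref{thm:invariant}.
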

It was conjectured in \cite{S2} that for all $d$, $\cW(V)^{\text{Vect}(V, \omega_0)}=\cA_0(V)$ and $\cW(V)^{\text{Vect}(V, \omega_1)}=\cA_1(V).$ In this section, we will prove this conjecture.

\subsection{$\text{Vect}_0(V, \omega_i)[t]$-invariants}

Let $\mathfrak g_0=\text{Vect}_0(V, \omega_i)$.
Let $\mathfrak g_0[t]=\oplus_{n\geq 0} \mathfrak g_0 t^n$ be the Lie algebra given by
$$[g_i t^i, g_jt^j]=[g_i,g_j]t^{i+j}, \quad \text{for} \quad g_i,g_j\in \mathfrak g_0. $$
The action of $\mathfrak g_0$ on $V$ induces an action of $\mathfrak g_0[t]$ on $SW(V)$, which is given by
\begin{eqnarray*}
gt^n \beta^{x'_i}_{(-k)}&=&\beta^{g x'_i}_{(-k+n)}, \ n<k, \quad\quad \quad gt^n \beta^{gx'_i}_{(-k)}=0, \ n\geq k, \\
gt^n b^{x'_i}_{(-k)}&=&b^{g x'_i}_{(-k+n)}, \ n<k, \quad  \quad\quad gt^n b^{g x'_i}_{(-k)}=0, \ n\geq k,  \\
gt^n c^{x_i}_{(-k)}&=&c^{g x_i}_{(-k+n)}, \ n<k, \quad  \quad \quad gt^n c^{gx_i}_{(-k)}=0, \ n\geq k,  \\
gt^n \gamma^{x_i}_{(-k)}&=&\gamma^{g x_i}_{(-k+n)}, \ n<k-1, \quad  gt^n \gamma^{gx_i}_{(-k)}=0, \ n\geq k-1.
\end{eqnarray*}

Note that $SW(V)$ is a ring with a derivation $\partial$, given by
$\partial P_{(-k)}=k P_{(-k-1)}$, for $P=\beta^{x'_i}, b^{x'_i}, c^{x_i}$ and $\alpha^{x_i}$. For $R\subset SW(V)$, let $R^{\mathfrak g_0[t]}$ denote the subspace of ${\mathfrak g_0[t]}$-invariants in $R$.

As preparation for the next lemma, we recall the following results from \cite{LSSII,LSII,LSIII}. Given an algebraic group $G$ over $\mathbb{C}$ and a finite-dimensional $G$-module $V$, the arc space $J_{\infty}(G)$ is an algebraic group which acts on the arc space $J_{\infty}(V)$. The quotient morphism $V\to V/\!\!/G$ induces a morphism $J_\infty(V)\to J_\infty(V/\!\!/G)$, so we have a morphism 
\begin{equation} J_\infty(V)/\!\!/J_\infty(G)\to J_\infty(V/\!\!/G).\end{equation}
In particular, we have a ring homomorphism
\begin{equation} \label{invariantringmap} \mathbb{C}[J_{\infty}(V/\!\!/G)] \rightarrow \mathbb{C}[J_{\infty}(V)]^{J_{\infty}(G)}.\end{equation} If $V/\!\!/G$ is smooth or a complete intersection, and $\mathbb{C}[V]$ has no nontrivial one-dimensional $G$-invariant subspaces, it was shown in \cite{LSSI} that \eqref{invariantringmap} is an isomorphism, although in general it is neither injective nor surjective. If \eqref{invariantringmap} is surjective, it follows that $\mathbb{C}[J_{\infty}(V)]^{J_{\infty}(G)}$ is generated as a differential algebra by the subalgebra $\mathbb{C}[V]^{G}$.

More explicitly, let $V_j\cong V$ for $j\geq 0$, and fix a basis $\{x_{1,j},\dots, x_{n,j}\}$ for $V_j$. Let $S = \mathbb{C}[\bigoplus_{j\geq 0} V_j]$. The map $\mathbb{C}[J_{\infty}(V)] = \mathbb{C}[x_1^{(j)},\dots, x_n^{(j)} |\ j\geq 0]   \rightarrow S$ sending $x_i^{(j)}\mapsto x_{i,j}$ is an isomorphism of differential algebras, where the differential $\partial$ on $S$ is given by $\partial(x_{i,j}) = (j+1)x_{i,j+1}$. In particular, the subalgebra $S_0 = \mathbb{C}[V_0]$ generates $S$ as a differential algebra.

For $j\geq 0$, let $\tilde{V}_j\cong V$ and let $L= \bigwedge \bigoplus_{j\geq 0} \tilde{V}_j$. Fix a basis $\{y_{1,j},\dots, y_{n,j}\}$ for $\tilde{V}^*_j$ and extend the differential on $S$ to an even differential $\partial$ on $S\otimes L$, defined on generators by $\partial(y_{i,j}) = (j+1)y_{i,j+1}$. There is an action of $J_{\infty}(G)$ on $S\otimes L$, and we may consider the invariant ring $(S\otimes L)^{J_{\infty}(G)}$. 
Let $L_0 = \bigwedge (\tilde{V}_0)\subset L$, and let $\bra (S_0\otimes L_0)^G\ket$ be the differential algebra generated by $(S_0\otimes L_0)^G$, which lies in $(S\otimes L)^{J_{\infty}(G)}$.

Since $G$ acts on the direct sum $V^{\oplus k}$ of $k$ copies of $V$, we have a map 
\begin{equation} \label{kcopiesofv}\mathbb{C}[J_{\infty}(V^{\oplus k} /\!\!/ G)] \rightarrow \mathbb{C}[J_{\infty}(V^{\oplus k})]^{J_{\infty}(G)}.\end{equation} 
\begin{thm} \label{oddgen} (\cite[Thm. 7.1]{LSSII}) Suppose that \eqref{kcopiesofv} is an isomorphism for all $k\geq 1$. Then $(S\otimes L)^{J_{\infty}(G)} =\bra (S_0\otimes L_0)^G\ket$. \end{thm}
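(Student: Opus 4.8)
The plan is to prove the final statement, Theorem~\ref{oddgen}, which asserts that under the hypothesis that the arc-space invariant map \eqref{kcopiesofv} is an isomorphism for all $k\geq 1$, we have $(S\otimes L)^{J_\infty(G)} = \bra (S_0\otimes L_0)^G\ket$. One inclusion is immediate: since $(S_0\otimes L_0)^G$ consists of $G$-invariants that are in particular $J_\infty(G)$-invariant (as $G\subset J_\infty(G)$ sits as the constant arcs, and invariance of a degree-zero element under $J_\infty(G)$ reduces to $G$-invariance), and since $\partial$ commutes with the $J_\infty(G)$-action, the differential algebra they generate lies in $(S\otimes L)^{J_\infty(G)}$. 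The real content is the reverse inclusion $(S\otimes L)^{J_\infty(G)} \subseteq \bra (S_0\otimes L_0)^G\ket$.

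The main idea is to reduce the statement about the exterior-algebra factor $L$ to the purely polynomial statement \eqref{kcopiesofv} by a polarization/linearization argument. I would first decompose $(S\otimes L)^{J_\infty(G)}$ by the exterior (fermionic) degree coming from $L$. An element of exterior degree $k$ is, up to antisymmetrization, a $J_\infty(G)$-invariant in $S \otimes (\bigoplus_{j} \tilde V_j)^{\otimes k}$ that is antisymmetric in the $k$ tensor slots. The crucial observation is that $S\otimes (\bigoplus_j \tilde V_j)^{\otimes k}$ is, as a $J_\infty(G)$-module, a summand of the coordinate ring $\mathbb{C}[J_\infty(V^{\oplus k})]$ realized via the arc-space structure: the copies $\tilde V_j$ encode the jets of $k$ extra linear copies of $V$, and multilinear (degree-one-in-each-copy) functions on $V^{\oplus k}$ correspond exactly to the tensor slots. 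Thus invariants in the relevant multidegree of $\mathbb{C}[J_\infty(V^{\oplus k})]^{J_\infty(G)}$ match the exterior-degree-$k$ part of $(S\otimes L)^{J_\infty(G)}$ after imposing antisymmetry.

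Concretely, the steps are: (i) fix the exterior degree $k$ and isolate, inside $\mathbb{C}[J_\infty(V^{\oplus k})]^{J_\infty(G)}$, the component that is multilinear in the $k$ vector copies; (ii) apply the hypothesis that \eqref{kcopiesofv} is an isomorphism, which tells us that $\mathbb{C}[J_\infty(V^{\oplus k})]^{J_\infty(G)}$ is generated as a differential algebra by $\mathbb{C}[V^{\oplus k}]^G$; (iii) extract from a differential-algebra expression for an invariant its multilinear component, noting that $\partial$ preserves multidegree in the sense that $\partial$ of a multilinear generator is again multilinear, so the multilinear invariants are generated as a differential algebra by the classical multilinear invariants $\mathbb{C}[V^{\oplus k}]^G_{\text{multilinear}}$; (iv) antisymmetrize over the $k$ copies to pass from $S\otimes(\bigoplus_j\tilde V_j)^{\otimes k}$ to $S\otimes \bigwedge^k(\bigoplus_j \tilde V_j)$, observing that antisymmetrization carries $\mathbb{C}[V^{\oplus k}]^G_{\text{multilinear}}$ onto $(S_0\otimes L_0)^G$ in exterior degree $k$ and commutes with $\partial$. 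Summing over all $k$ then yields the desired generation statement.

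The step I expect to be the main obstacle is (iii)--(iv): controlling how the differential-algebra generation interacts with both the multidegree grading \emph{and} the antisymmetrization. The delicate point is that a product of classical invariants, each living in some copies of $V$, need not be multilinear in the sense required, so one must show that projecting onto the multilinear component and then antisymmetrizing does not lose any invariants and does not introduce generators outside $\bra(S_0\otimes L_0)^G\ket$. This requires checking that the projection operators (onto fixed multidegree, and onto the antisymmetric part) commute with $\partial$ and with the $J_\infty(G)$-action, and that the antisymmetrization of a generator from $\mathbb{C}[V^{\oplus k}]^G$ indeed lands in the differential algebra generated by $(S_0\otimes L_0)^G$ rather than only in $(S\otimes L)^{J_\infty(G)}$. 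Once these compatibilities are verified, the reduction to \eqref{kcopiesofv} is clean and the theorem follows.
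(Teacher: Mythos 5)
The paper itself contains no proof of this theorem --- it is quoted as Theorem 7.1 of \cite{LSSII} (with the surjectivity-only variant attributed to \cite{LSIV}) --- so the only possible comparison is with the cited source, whose proof is exactly the polarization argument you describe. Your reconstruction --- identifying the exterior-degree-$k$ part of $(S\otimes L)^{J_{\infty}(G)}$ with antisymmetric, multilinear invariants inside $\mathbb{C}[J_{\infty}(V^{\oplus (1+k)})]^{J_{\infty}(G)}$, invoking the hypothesis \eqref{kcopiesofv} for $1+k$ copies to write such invariants as differential polynomials in multihomogeneous classical invariants, and then restituting odd variables (which automatically antisymmetrizes and sends each multilinear classical invariant into $(S_0\otimes L_0)^G$) --- is essentially that same argument, and it correctly accounts for why the hypothesis is needed for \emph{all} $k\geq 1$; the compatibilities you flag (multidegree and antisymmetrization commuting with $\partial$ and the $J_{\infty}(G)$-action) do hold by standard checks, so the proposal is correct.
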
 In fact, under the above hypothesis, all differential algebraic relations in $(S\otimes L)^{J_{\infty}(G)}$ are consequences of relations among the generators of $(S_0\otimes L_0)^G$, and their derivatives \cite[Thm. 3.1 (2)]{LSIV}), but this stronger fact will not be needed in this paper. By \cite[Cor. 1.5]{LSII}, the hypothesis of Theorem \ref{oddgen} is satisfied in the case $G = Sp_{2d}$ and $V= \mathbb{C}^{2d}$.

In the case $G = SL_d$ and $V= \mathbb{C}^d \oplus (\mathbb{C}^d)^*$, this hypothesis is not satisfied since \eqref{kcopiesofv} is surjective for all $k$ but fails to be injective when $k \geq d + 3$; see \cite[Thm. 1.2]{LSIII}. However, the surjectivity of \eqref{kcopiesofv} for all $k$ in this case is enough for our purposes, due to the following:
\begin{thm} \label{oddgenweaker} (\cite[Thm. 3.1 (1)]{LSIV}) Suppose that \eqref{kcopiesofv} is surjective for all $k\geq 1$. Then $(S\otimes L)^{G_{\infty}}=\bra (S_0\otimes L_0)^G\ket$. \end{thm} 
This applies to the case of $G = SL_d$ and $V= \mathbb{C}^d \oplus (\mathbb{C}^d)^*$. Note that if \eqref{kcopiesofv} fails to be injective for some $k$, it need not be the case that all differential algebraic relations in $(S\otimes L)^{G_{\infty}}$ are consequences of relations in $(S_0\otimes L_0)^G$ and their derivatives, but this does not affect our results.

\begin{lemma}\label{lemma:gtinvariant} Recall the isomorphism $\pi:SW(V)\to \bW(V)$ given by \eqref{eqn:isopi}.
\begin{enumerate} 
\item If $\mathfrak g_0= \text{Vect}_0(V, \omega_0)$, as a ring with a derivation $\partial$, $SW(V)^{\mathfrak g_0[t]}$ is generated by $$\pi^{-1}(Q(z)), \pi^{-1}(L(z)), \pi^{-1}(G(z)), \pi^{-1}(J(z)),\pi^{-1}(E(z)), \pi^{-1}(B(z)),\pi^{-1}(C(z)), \pi^{-1}(D(z)).$$
\item If $\mathfrak g_0= \text{Vect}_0(V, \omega_1)$, as a ring with a derivation $\partial$, $SW(V)^{\mathfrak g_0[t]}$ is generated by $$\pi^{-1}(Q(z)), \pi^{-1}(L(z)), \pi^{-1}(G(z)), \pi^{-1}(J(z)),\pi^{-1}(E'(z)), \pi^{-1}(B'(z)),\pi^{-1}(C'(z)), \pi^{-1}(D'(z)).$$
\end{enumerate}
\end{lemma}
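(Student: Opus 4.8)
The plan is to reduce the computation of $SW(V)^{\mathfrak g_0[t]}$ to the arc-space invariant theory recalled above and then to read off the stated generators from the classical first fundamental theorem (FFT). Throughout, write $G=SL_d$ when $\omega_i=\omega_0$ (so that $\mathfrak g_0=\text{Vect}_0(V,\omega_0)\cong\mathfrak{sl}_d$) and $G=Sp_d$ when $\omega_i=\omega_1$ (so that $\mathfrak g_0\cong\mathfrak{sp}_d$), and set $W=V\oplus V^*$.

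First I would identify the full $\beta\gamma-bc$ system $\cW(V)$ with the super arc-space algebra $S\otimes L$ attached to the $G$-module $W$: the bosonic generators $\beta^{x'_i}_{(-k)},\gamma^{x_i}_{(-k)}$ ($k\geq 1$) furnish the arc space of $W$, the fermionic generators $b^{x'_i}_{(-k)},c^{x_i}_{(-k)}$ furnish the exterior arc space of $W$, the vertex-algebra derivation agrees with $\partial$, and the Lie algebra $\mathfrak g_0[t]$ is exactly $\text{Lie}(G_\infty)$ with $G_\infty=J_\infty(G)$. Since $G_\infty$ is connected, $\mathfrak g_0[t]$-invariance coincides with $G_\infty$-invariance. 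The hypothesis of Theorem \ref{oddgen} holds here, because $W^{\oplus k}$ is a sum of copies of the standard modules for which \eqref{kcopiesofv} is surjective by the two cases listed after Theorem \ref{oddgen}; hence $\cW(V)^{G_\infty}=\langle(\cW(V)_0)^G\rangle$, the differential algebra generated by the classical $G$-invariants of the bottom modes $\beta^{x'_i}_{(-1)},\gamma^{x_i}_{(-1)},b^{x'_i}_{(-1)},c^{x_i}_{(-1)}$.

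Next I would compute these classical invariants by the super FFT. For $SL_d$ they are generated by the four contractions $\sum_i\beta^{x'_i}_{(-1)}\gamma^{x_i}_{(-1)}$, $\sum_i\beta^{x'_i}_{(-1)}c^{x_i}_{(-1)}$, $\sum_i b^{x'_i}_{(-1)}\gamma^{x_i}_{(-1)}$, $\sum_i b^{x'_i}_{(-1)}c^{x_i}_{(-1)}$ together with the two top-exterior invariants $b^{x'_1}_{(-1)}\cdots b^{x'_d}_{(-1)}$ and $c^{x_1}_{(-1)}\cdots c^{x_d}_{(-1)}$; for $Sp_d$ the top-exterior invariants are replaced, as in the symplectic FFT, by the quadratic invariants $\sum_i b^{x'_{2i-1}}_{(-1)}b^{x'_{2i}}_{(-1)}$ and $\sum_i c^{x_{2i-1}}_{(-1)}c^{x_{2i}}_{(-1)}$. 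Under $\pi^{-1}$ the contractions $\sum_i\beta^{x'_i}_{(-1)}c^{x_i}_{(-1)}$ and $\sum_i b^{x'_i}_{(-1)}c^{x_i}_{(-1)}$ are exactly $\pi^{-1}(Q)$ and $-\pi^{-1}(J)$, while the two top-exterior (resp.\ quadratic) invariants are $\pi^{-1}(D),\pi^{-1}(E)$ (resp.\ $\pi^{-1}(D'),\pi^{-1}(E')$); the remaining two contractions involve $\gamma^{x_i}_{(-1)}$.

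The crux, and the step I expect to be the main obstacle, is the descent from $\cW(V)^{G_\infty}$ to $SW(V)^{\mathfrak g_0[t]}$. Because $SW(V)$ is built from $\alpha=\partial\gamma$ rather than $\gamma$, it omits the bottom modes $\gamma^{x_i}_{(-1)}$; concretely $SW(V)\cong\cW(V)/(\gamma^{x_1}_{(-1)},\dots,\gamma^{x_d}_{(-1)})$, and this ideal is $\mathfrak g_0[t]$-stable (it is $G$-stable and is annihilated by the positive $t$-modes), so the quotient map $q$ is $\mathfrak g_0[t]$-equivariant. The delicate point is that $(\gamma_{(-1)})$ is not a differential ideal: indeed $\partial\gamma^{x_i}_{(-1)}=\gamma^{x_i}_{(-2)}=\alpha^{x_i}_{(-1)}$ survives in the quotient, so $q$ is a morphism of $\mathfrak g_0[t]$-algebras but not of differential algebras, and one cannot simply push the differential-algebra description through $q$. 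In the $\gamma$-variables both the $G_\infty$-action and the derivation are the standard arc-space ones (so Theorem \ref{oddgen} applies), whereas in the intrinsic $\alpha$-variables of $SW(V)$ the derivation is standard but the action is not; reconciling these two pictures is the heart of the argument. I would first show, using the filtration of $\cW(V)$ by degree in the $\gamma^{x_i}_{(-1)}$ (with respect to which each $gt^n$ splits into a degree-preserving and a degree-raising part) together with the reductivity of $G$, that $q$ is surjective on invariants, so that $SW(V)^{\mathfrak g_0[t]}=q\big(\cW(V)^{G_\infty}\big)$. I would then track the generators across $q$: the invariants $\pi^{-1}(Q),\pi^{-1}(J),\pi^{-1}(D),\pi^{-1}(E)$ (resp.\ $\pi^{-1}(D'),\pi^{-1}(E')$) already lie in $SW(V)$ and are unchanged; the two contractions involving $\gamma^{x_i}_{(-1)}$ map to $0$, but their images after one application of $\partial$ survive as $\sum_i\beta^{x'_i}_{(-1)}\alpha^{x_i}_{(-1)}$ and $\pi^{-1}(G)=\sum_i b^{x'_i}_{(-1)}\alpha^{x_i}_{(-1)}$, which together with the $\partial$-descendants of $\pi^{-1}(J)$ account for $\pi^{-1}(L)$ and $\pi^{-1}(G)$; and $B,C$ (resp.\ $B',C'$) are recovered as the zero-mode products $Q_{(0)}D,\ G_{(0)}E$ (resp.\ primed). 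A weight-by-weight linear-algebra check then shows that the differential subalgebra generated by the listed fields exhausts $q\big(\cW(V)^{G_\infty}\big)$, while the reverse inclusion---that these fields are $\mathfrak g_0[t]$-invariant---is a direct computation consistent with Lemma \ref{lem:Linvariant}. By contrast, the invocation of the arc-space theorems and of the classical FFT is essentially mechanical.
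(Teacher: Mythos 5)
Your proposal stalls at exactly the step you call the crux, and the two tools you offer there do not fill the gap. First, the claim that $q:\cW(V)\to\cW(V)/(\gamma^{x_1}_{(-1)},\dots,\gamma^{x_d}_{(-1)})\cong SW(V)$ is surjective on invariants cannot follow from ``the filtration by degree in $\gamma_{(-1)}$ together with the reductivity of $G$'': reductivity only controls the $t^0$-part of the action, while $J_\infty(G)$ is an extension of $G$ by a pro-unipotent group. Writing a lift as $\xi=\bar a+\xi_1+\xi_2+\cdots$ in your filtration, the equations to solve are $gt^n\cdot\xi_1=-R_{g,n}\bar a$ (with $R_{g,n}$ the degree-raising part of the action), an $H^1$-type lifting problem for the non-reductive Lie algebra $\mathfrak g_0[t]$ for which no averaging operator exists; you give no argument that the obstructions vanish. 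Second, even granting surjectivity, since $q\circ\partial\neq\partial\circ q$, the fact that $\cW(V)^{J_\infty(G)}$ is differentially generated by its bottom invariants implies nothing about differential generation of $q\big(\cW(V)^{J_\infty(G)}\big)$ by the images of those generators; the ``weight-by-weight linear-algebra check'' you defer to the end is not a verification but is precisely the content of the lemma. There is also an error in your classical input: for $SL_d$ with one even and one odd copy of $V$ and of $V^*$, the super FFT generators include the mixed determinants with exactly one even column (the bottom-level versions of $B$ and $C$); these are not expressible through the contractions together with $D$ and $E$, and they cannot be ``recovered'' afterwards via $Q_{(0)}$ and $G_{(0)}$, which are vertex-algebra operations, not operations of the classical invariant ring.

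The idea you are missing is that no descent is needed at all, because $SW(V)$ is itself, on the nose, a super arc-space algebra: take as level-$k$ variables $\beta_{(-k-1)},b_{(-k-1)},c_{(-k-1)}$ and $\alpha_{(-k-1)}$ (equivalently $\gamma_{(-k-2)}$), $k\geq 0$. The defining formulas of the $\mathfrak g_0[t]$-action in Section 3 --- in particular the shifted truncation $gt^n\gamma_{(-k)}=0$ for $n\geq k-1$ --- say exactly that $\gamma_{(-2)}=\alpha_{(-1)}$ is a level-zero variable and that the $\gamma$-tower is a standard arc tower of a copy of $V^*$ (resp.\ of the standard symplectic module). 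Hence $SW(V)\cong S\otimes L$ as algebras with $\mathfrak g_0[t]$-action, for the module $V\oplus V^*$ in the $\omega_0$ case and a sum of copies of the standard module in the $\omega_1$ case; the surjectivity hypothesis of Theorem \ref{oddgen} holds by the results cited from \cite{LSII,LSIII}, and the bottom-level classical invariants are the eight listed elements (with $\pi^{-1}(L)$ read as its top graded component $\sum_i\beta^{x'_i}_{(-1)}\alpha^{x_i}_{(-1)}$). That identification plus Theorem \ref{oddgen} is the paper's entire proof. Your observation that the derivation and the action on the $\alpha$-tower cannot both be put in standard form by rescaling is correct and is a genuine subtlety, but the cure is not a quotient construction: invariance is a statement about the action alone, so the module identification already settles the invariant theory, and the residual normalization discrepancy only enters through which differential closure one takes --- a matter that is absorbed by the leading-term bookkeeping of Lemma \ref{lemma: ginvariant} and Theorem \ref{thm:invariant}, where these generators are only ever matched against top graded components, rather than by passing through $\cW(V)$.
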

\begin{proof} For the first statement, $\mathfrak{g}_0 = \mathfrak{sl}_d$, and we have an isomorphism of $\mathfrak{sl}_d[t]$-modules 
$$SW(V) \cong \mathbb{C}[J_{\infty} (V \oplus V^*)] \otimes \bigwedge \big(\oplus_{j\geq 0} \big(V_j \oplus V^*_j)\big)= S \otimes L,$$ where $V = \mathbb{C}^d$. Under the linear isomorphism \eqref{eqn:isopi}, the above fields correspond to the generators of the subalgebra $(S_0 \otimes L_0)^{SL_d}$, which by Theorem \ref{oddgenweaker} generate $(S \otimes L)^{J_{\infty}(SL_d)} = (S \otimes L)^{\mathfrak{sl}_d[t]}$ as a differential algebra. 

The second statement is proven in the same way using Theorem \ref{oddgen}, since $\mathfrak{g}_0 = \mathfrak{sp}_{2d}$ and we have an isomorphism of $\mathfrak{sp}_{2d}[t]$-modules 
$$SW(V) \cong \mathbb{C}[J_{\infty} (V)] \otimes \bigwedge \big(\oplus_{j\geq 0} V_j\big)= S \otimes L,$$ where $V = \mathbb{C}^{2d}$. Then the above fields correspond to the generators of $(S_0 \otimes L_0)^{Sp_{2d}}$, and hence generate $SW(V)^{\mathfrak{sp}_{2d}[t]}$ as a differential algebra. \end{proof}

\subsection{$\text{Vect}(V, \omega_i)$-invariants}
Let $SW_n(V)$ be the linear subspace of $SW(V)$ which is spanned by the monomials of $\gamma_{(i-1)},\beta_{(i)},b_{(i)},c_{(i)}$, $i<0$  with the property that the number of $c$ in the monomial plus double of the number of $\gamma$ in the monomial is
$n$. We then have the grading
$$SW(V)=\oplus_{n\geq 0}SW_n(V).$$ Since the action of $\mathfrak g_0[t]$ on $SW(V)$ preserves $SW_n(V)$, $SW(V)^{\mathfrak g_0[t]}=\oplus_{n\geq 0}SW_n(V)^{\mathfrak g_0[t]}$.

\begin{lemma}\label{lemma: ginvariant}
Let $a\in \cW(V)^{\text{Vect}(V, \omega_i)}$ be homogeneous with respect to conformal weight. Then
\begin{enumerate}
\item $a \in \cW_+(V)$. In particular, we may write $a=\pi(a_k+a_{k-1}+\cdots)$ where $\pi$ is given by \eqref{eqn:isopi}, and $a_n\in SW_n(V)$.
\item The leading term $a_k$ is $\text{Vect}_0(V, \omega_i)[t]$-invariant.
\end{enumerate}
\end{lemma}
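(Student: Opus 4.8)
The plan is to prove the two assertions by feeding different graded pieces of $\text{Vect}(V,\omega_i)=\bigoplus_{m\geq-1}\text{Vect}_m(V,\omega_i)$ into the equations $\mathcal L(g)a=0$.

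For (1) I would use the constant vector fields. The Lie derivative of a constant-coefficient field along $\omega_0$ or $\omega_1$ is exact, hence zero, so $\text{Vect}_{-1}(V)=V\subset\text{Vect}(V,\omega_i)$. Feeding $v=\frac{\partial}{\partial x_i}$ into \eqref{eqn:actionL} and using $Q_{(0)}b^{x'_i}=\beta^{x'_i}$ (immediate from $b^{x'_i}(z)c^{x_j}(w)\sim\bra x_j,x'_i\ket(z-w)^{-1}$), one gets $\mathcal L(\frac{\partial}{\partial x_i})=\beta^{x'_i}_{(0)}$. Under $\tilde\pi$ this operator annihilates every generator in $S_0$ and sends $\gamma^{x_j}_{(-1)}\mapsto\delta_{ij}$, so it is exactly $\partial/\partial\gamma^{x_i}_{(-1)}$. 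Thus $\mathcal L(\frac{\partial}{\partial x_i})a=0$ for all $i$ forces $a$ to be independent of the $\gamma^{x_j}_{(-1)}$, i.e. $a\in SW(V)\otimes\{1\}=\cW_+(V)$; writing $a=\pi(\sum_n a_n)$ with $a_n\in SW_n(V)$ and using conformal-weight homogeneity to make the sum finite gives the stated decomposition.

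For (2) I would treat the grading $SW(V)=\bigoplus_n SW_n(V)$ as a filtration and run the same computation with higher $m$. For $g\in\text{Vect}_0(V,\omega_i)=\mathfrak{g}_0$, expanding \eqref{eqn:actionL} shows $\mathcal L(g)$ is a zero mode of the linear currents built from $:\gamma^{x_p}\beta^{x'_q}:$ and $:c^{x_p}b^{x'_q}:$; since $\mathfrak{g}_0$ is traceless (resp. symplectic) the normal-ordering constant drops out, $\mathcal L(g)$ preserves each $SW_n(V)$, and on $\cW_+(V)\cong SW(V)$ it acts exactly as $g=g\,t^0\in\mathfrak{g}_0[t]$. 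Hence each $a_n$, in particular $a_k$, is $\mathfrak{g}_0$-invariant. For $v=\sum_iP_i\frac{\partial}{\partial x_i}\in\text{Vect}_m(V,\omega_i)$ with $m\geq1$ I would expand $\mathcal L(v)=\big(\sum_\ell:\tfrac{\partial P_i}{\partial x_\ell}(\gamma)c^{x_\ell}b^{x'_i}:+:P_i(\gamma)\beta^{x'_i}:\big)_{(0)}$, determine how its zero mode moves the $SW_\bullet$-grading, and read off the extremal graded component of $\mathcal L(v)a=0$. Since everything except the contribution of $a_k$ drops out of that extremal component, this produces a closed family of conditions on the single leading term $a_k$.

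The crux, and the step I expect to be hardest, is to show that this family of conditions is equivalent to the vanishing $g\,t^m a_k=0$ for all $g\in\mathfrak{g}_0$ and $m\geq0$, i.e. that the leading behaviour of the Cartan-type action on the top graded piece reproduces the current-algebra action recorded before Lemma \ref{lemma:gtinvariant}. The difficulty is genuine: each $\mathcal L(v)$ preserves conformal weight whereas $g\,t^m$ lowers it by $m$, so the identification cannot be a naive symbol-by-symbol comparison. It must instead be organized through the explicit zero-mode computation together with the $\mathfrak{g}_0$-module structure of $\text{Vect}_m(V,\omega_i)$ coming from the divergence-free (resp. Hamiltonian) constraint on the $P_i$, tracking the $SW_\bullet$-grading and the conformal weight simultaneously. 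Once this identification is in place, $a_k\in SW_k(V)^{\mathfrak{g}_0[t]}$ follows and (2) is complete; by Lemma \ref{lemma:gtinvariant} the leading term is then a differential polynomial in the listed generators, which is exactly what the remainder of the argument requires.
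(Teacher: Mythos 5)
Your part (1) is correct and is exactly the paper's argument: $\text{Vect}_{-1}(V,\omega_i)=\text{Vect}_{-1}(V)$, the operators $\mathcal L(\frac{\partial}{\partial x_j})=\beta^{x'_j}_{(0)}$ act as $\partial/\partial\gamma^{x_j}_{(-1)}$, so invariance forces $a\in\cW_+(V)$, and conformal-weight homogeneity gives the finite decomposition $a=\pi(a_k+a_{k-1}+\cdots)$.

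Part (2), however, has a genuine gap, and it is precisely at the point you flag as ``the crux.'' You correctly observe that the extremal graded component of $\mathcal L(v)a=0$ for $v\in\text{Vect}_m(V,\omega_i)$, $m\geq 1$, yields conditions involving only $a_k$, but you then \emph{assume} that this family of conditions can be identified with $g\,t^m a_k=0$ for all $g\in\mathfrak g_0$, $m\geq 0$, deferring the identification to an unspecified bookkeeping of gradings. That identification is false as a direct statement, and supplying a correct replacement is the actual content of the proof. Concretely, for $v_1=x_1^2\frac{\partial}{\partial x_2}\in\mathfrak g_1$ the extremal component of $\mathcal L(v_1)a=0$ in $SW_{k+2}(V)$ is not $g_1t^m a_k=0$ for any single $m$; it is the single ``dressed'' relation $K_1a_k=0$, where $K_1=\sum_{l\geq 1}\gamma^{x_1}_{(-l-1)}\,g_1t^l$ and $g_1=x_1\frac{\partial}{\partial x_2}$, i.e.\ an infinite sum in which each current operator $g_1t^l$ appears multiplied by a $\gamma$-variable that restores the conformal weight. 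Disentangling the individual vanishings $g_1t^l\,a_k=0$ from such sums requires a new idea, which the paper provides: it takes a second vector field $v_0=x_1^2\frac{\partial}{\partial x_1}-2x_1x_2\frac{\partial}{\partial x_2}$ to get a companion relation $K_0a_k=0$, forms the iterated commutators $K_n=[K_0,K_{n-1}]$ (each of which annihilates $a_k$), computes that $K_n=\sum_{l_i\geq1}c_{l_1,\dots,l_n}\bigl(\prod_{i=1}^n\gamma^{x_1}_{(-l_i-1)}\bigr)g_1t^{l_1+\cdots+l_n}$ with strictly positive coefficients, and then runs a maximality argument: if $L\geq 1$ is the largest integer with $g_1t^L a_k\neq 0$, then all terms of $K_La_k$ vanish except $c_{1,\dots,1}(\gamma^{x_1}_{(-2)})^L g_1t^L a_k$, which is nonzero because multiplication by $\gamma^{x_1}_{(-2)}$ is injective on the polynomial ring $SW(V)$ --- contradicting $K_La_k=0$. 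This yields $g_1t\,a_k=0$ only for the single element $g_1$, and the proof is finished not by establishing $g\,t^m a_k=0$ for all $g,m$ directly, but by the Lie-theoretic fact that $\mathfrak g_0[t]$ is generated by $\mathfrak g_0$ together with $g_1t$ because $\mathfrak g_0$ is simple. Without the commutator-positivity-maximality argument (or some substitute), your proposal establishes only $\mathfrak g_0$-invariance of $a_k$, not $\mathfrak g_0[t]$-invariance, and so does not prove the lemma.
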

\begin{proof} It is easy to see that $\text{Vect}_{-1}(V, \omega_i)=\text{Vect}_{-1}(V)$. So for any $a\in \cW(V)^{\text{Vect}(V, \omega_i)}$, $\cL(\frac{\partial}{\partial x_j}) a =\beta^{x'_j}_{(0)}a=0$ for any $1\leq j\leq d$. Therefore $\gamma^{x_i}_{(-1)}$ does not appear in $a$, so that $a\in \cW_+(V)$. Since $a$ has fixed conformal weight, it is apparent that it has the form $a=\pi(a_k+a_{k-1}+\cdots)$ with $a_n\in SW_n(V)$. This proves (1).
	
	Next, let $ \mathfrak g_j=\text{Vect}_j(V, \omega_i)$.
	 It is easy to see that $\pi$ is $\mathfrak g_0$-equivariant.  So $a_k$ is $\mathfrak g_0$-invariant. Let $v_1=x_1^2\frac{\partial}{\partial x_2}\in \mathfrak g_1$ and $g_1=x_1\frac{\partial}{\partial x_2}\in \mathfrak g_0$.
Let $$K_1=\sum_{l\geq 1}\gamma_{(-l-1)}^{x_1}g_1t^l.$$ We have
$$0=\mathcal L(v_1)a=(2:(:\gamma^{x_1}c^{x_1}:)b^{x'_2}:_{(0)}+:(:\gamma^{x_1}\gamma^{x_1}:)\beta^{x'_2}:_{(0)})\pi(a_k+a_{k-1}+\cdots) .$$
Consider the homogeneous component $SW_{k+2}(V)$:
$$0=(\mathcal L(v_1)a)_{k+2}=2\sum_{l =1}^\infty \gamma^{x_1}_{(-l-1)} g_1t^{l}a_k=2K_1 a_k.$$
Similarly, let $v_0=x_1^2\frac{\partial}{\partial x_1}-2x_1x_2\frac{\partial}{\partial x_2}\in \mathfrak g_1$ and $g_0=x_1\frac{\partial}{\partial x_1}-x_2\frac{\partial}{\partial x_2}\in \mathfrak g_0$.
Let $$K_0=\sum_{l\geq 1}\gamma_{(-l-1)}^{x_1}g_0t^l-\sum_{l\geq 1}\gamma_{(-l-1)}^{x_2}g_1t^l.$$
We have $K_0 a_k=0$.

Inductively, let $K_n=[K_0, K_{n-1}]$, for $n\geq 2$, then $K_n a_k=0$.
$$[K_0, \gamma_{(-l)}^{x_1}]=\sum_{s=2}^{l-2} \gamma_{(-s)}^{x_1}\gamma_{(-l+s)}^{x_1}.$$
$$[K_0, g_1t^j]=2 \sum_{s\geq 1} \gamma^{x_1}_{(-s-1)}g_1t^{j+s}.$$
So inductively, we obtain
$$K_n=\sum_{l_i\geq 1}c_{l_1,\dots,l_n} (\prod_{i=1}^{n} \gamma_{-l_i-1}^{x_1})g_1t^{l_1+\cdots +l_n}.$$
Here $c_{l_1,\dots,l_n}$ are positive numbers.
When $l$ is large enough, $g_1t^{l} a_k=0$. Let $L$ be the largest number such that $g_1t^{L}a_k\neq 0$. If $L\geq 1$
then $$0=K_La_k= c_{1,\dots,1}(\gamma_{-2}^{x_1}))^Lg_1t^{L}a_k\neq 0.$$
So $g_1t \, a_k =0$. Since $\mathfrak g_0$ is a simple Lie algebra, $\mathfrak g_0[t]$ is generated by $\mathfrak g_0$ and $g_1t$. So $a_k$ is $\mathfrak g_0[t]$-invariant.
\end{proof}

\begin{thm}\label{thm:invariant} $\cW(V)^{\text{Vect}(V, \omega_0)}=\cA_0(V);$ $\cW(V)^{\text{Vect}(V, \omega_1)}=\cA_1(V)$. \end{thm}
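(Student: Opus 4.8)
The plan is to prove the nontrivial inclusion $\cW(V)^{\text{Vect}(V,\omega_i)}\subseteq \cA_i(V)$, since the reverse inclusion is exactly Lemma \ref{lem:Linvariant}. The argument is a downward induction on the grading $SW(V)=\bigoplus_{n\geq 0}SW_n(V)$, in which the role of a ``classical limit'' is played by the linear isomorphism $\pi$ of \eqref{eqn:isopi}. Since the action \eqref{eqn:actionL} preserves conformal weight, I may assume that $a\in \cW(V)^{\text{Vect}(V,\omega_i)}$ is homogeneous of some fixed conformal weight; then $\pi^{-1}(a)$ lies in a finite-dimensional weight space, so its decomposition $\pi^{-1}(a)=a_k+a_{k-1}+\cdots$ into graded pieces $a_n\in SW_n(V)$ furnished by Lemma \ref{lemma: ginvariant} is finite with a well-defined leading term $a_k$.

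First I would record the compatibility between the vertex structure and the grading: every nontrivial OPE contraction pairs a $\beta$ with a $\gamma$ (equivalently $\alpha$) or a $b$ with a $c$, and in either case the $SW_n$-grading (counting each $c$ with weight $1$ and each $\gamma$ with weight $2$) strictly drops. Consequently, for $X,Y\in \cW_+(V)$ one has $\pi^{-1}(\,:XY:\,)=\pi^{-1}(X)\cdot\pi^{-1}(Y)+(\text{terms of strictly lower grading})$, where the product on the right is the supercommutative product of $SW(V)$; moreover $\pi$ intertwines the vertex translation operator $L_{-1}$ with the derivation $\partial$ on $SW(V)$ exactly. By induction on the number of factors, the same holds for any iterated normally ordered differential monomial: its image under $\pi^{-1}$ is the corresponding supercommutative differential monomial plus strictly lower-grading corrections, independently of how the normal ordering is nested.

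With this in hand the inductive step runs as follows. By Lemma \ref{lemma: ginvariant} the leading term $a_k$ is $\text{Vect}_0(V,\omega_i)[t]=\mathfrak g_0[t]$-invariant, so by Lemma \ref{lemma:gtinvariant} it is a supercommutative differential polynomial $P$ in $\pi^{-1}(Q),\pi^{-1}(L),\pi^{-1}(G),\pi^{-1}(J)$ together with $\pi^{-1}(E),\pi^{-1}(B),\pi^{-1}(C),\pi^{-1}(D)$ (or their primed analogues in the Hamiltonian case). I then form the element $b\in \cA_i(V)$ obtained by reading $P$ with each supercommutative product replaced by an iterated Wick product and the derivation replaced by $\partial=L_{-1}$. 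By the previous paragraph $\pi^{-1}(b)=a_k+(\text{strictly lower grading})$; since $b\in\cA_i(V)\subseteq\cW(V)^{\text{Vect}(V,\omega_i)}$ by Lemma \ref{lem:Linvariant}, the difference $a-b$ is again invariant and its leading grading is strictly smaller than $k$. Arguing by downward induction on the leading grading, which is bounded below by $0$, we conclude that $a\in\cA_i(V)$.

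The main obstacle is the compatibility statement of the second paragraph — that passing from Wick products to supercommutative products introduces only strictly lower-grading terms. This is exactly where the precise choice of grading (weight $1$ for $c$, weight $2$ for $\gamma$, weight $0$ for $\beta$ and $b$) is essential, being engineered so that both types of contraction decrease the grading. One must also keep track of the Koszul signs when reordering the odd generators $b,c$, and verify that the leading term is insensitive to the (non-associative) nesting of the normal ordering; both are routine once the grading behaviour of contractions is settled. A minor separate point is the justification that $\pi$ is $\partial$-equivariant and that $\cW(V)^{\text{Vect}(V,\omega_i)}$ is graded by conformal weight, which follow from the definitions in Section \ref{sec:two} and the weight-preserving property of \eqref{eqn:actionL}.
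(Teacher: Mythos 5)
Your proposal is correct and follows essentially the same route as the paper: the same reduction of the leading term via Lemma \ref{lemma: ginvariant} and Lemma \ref{lemma:gtinvariant}, the same lift of the differential polynomial to a normally ordered polynomial $b\in\cA_i(V)$, and the same induction on the leading grading of $\pi^{-1}(a-b)$. The only difference is presentational: you spell out the compatibility claim (that Wick-product contractions, pairing $\beta$ with $\gamma$ or $b$ with $c$, strictly lower the $SW_n$-grading, so $\pi^{-1}(b)$ has leading term $a_k$), which the paper asserts without proof.
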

\begin{proof}For the first equation, let $\mathfrak g_0= \text{Vect}_0(V, \omega_0)$. 
	By Lemma \ref{lemma:gtinvariant},
 any $a_k\in SW_k(V)^{\mathfrak g_0[t]}$ can be represented as a polynomial in $\partial^l \pi^{-1}(Q(z))$, $\partial^l\pi^{-1}(L(z))$, $\partial^l\pi^{-1}(G(z))$,$\partial^l \pi^{-1}(J(z))$, $\partial^l \pi^{-1}(E(z))$, $\partial^l \pi^{-1}(B(z))$, $\partial^l\pi^{-1}(C(z))$, $\partial^l\pi^{-1}(D(z))$. Let $b$ be the corresponding normally ordered polynomial in $\partial^lL(z)$, $\partial^lG(z)$, $\partial^l J(z)$, $\partial^l E(z)$, $\partial^l B(z)$, $\partial^lC(z)$, $\partial^lD(z)$. We have
$\pi^{-1}(b)=b_k+b_{k-1}+\cdots$,  $b_n\in SW_n(V)$ with $b_k=a_k$.

If $a\in \mathcal W(V)^{\text{Vect}(V, \omega_0)}$, $\pi^{-1}(a)=a_k+a_{k-1}+\cdots.$ By Lemma \ref{lemma: ginvariant}, $a_k\in SW_k(V)^{\mathfrak g_0[t]}$. So there is a $b\in \cA_0(V)$ with $\pi^{-1}(b)=b_k+b_{k-1}+\cdots$ and $a_k=b_k$.
Thus $\pi^{-1}(a-b)=(a_{k-1}-b_{k-1})+\cdots$ and $a-b$ is $\text{Vect}(V, \omega_0)$-invariant. By induction on $k$, we conclude $a\in \cA_0(V)$. So $\cW(V)^{\text{Vect}(V, \omega_0)}=\cA_0(V)$.

The proof for the second equation is similar.
\end{proof}

\section{Chiral de Rham complex}\label{sec:three}
Let $\mathcal W=\mathcal W(\mathbb C^d)$ and $x_1',\dots,x_d'$ be a standard basis of $\mathbb C^d$. Then $\cW$ has strong generators $\beta^i=\beta^{x'_i}$, $b^i=b^{x'_i}$, $\gamma^i=\gamma^{x_i}$ and $c^i=c^{x_i}$, and is a free $\mathbb C[\gamma^1_{(-1)},\dots, \gamma^d_{(-1)}]$ module. If $X$ is a complex manifold and $(U,\gamma^1,\dots, \gamma^d)$ is a complex coordinate system of $X$,
$\mathcal O(U)$ is a $\mathbb C[\gamma^1_{(-1)},\dots, \gamma^d_{(-1)}]$-module
by identifying the action of $\gamma^i_{(-1)}$ with  the product of $\gamma^i$. 
The chiral de Rham complex $\Omega_X^{\text{ch}}$ is a sheaf of vertex algebras on $X$ whose algebra of sections $\Omega_X^{\text{ch}}(U)$ is given by $$\Omega_X^{\text{ch}}(U)=\cW \otimes_{\mathbb C[\gamma^1_{(-1)},\dots, \gamma^d_{(-1)}]}\mathcal O(U).$$
In particular, $\Omega_X^{\text{ch}}(U)$ is the vertex algebra with strong generators $\beta^i(z), b^i(z), c^i(z)$ and $f(z)$, $f\in \mathcal O(U)$. The nontrivial OPEs among these generators are
$$\beta^i(z)  f(w)\sim \frac{\partial f}{\partial \gamma^i}(w)(z-w)^{-1},\quad b^i(z) c^j(w)\sim \delta^i_j (z-w)^{-1},$$ as well as the normally ordered product relations $$:f(z)g(z):\ =fg(z), \text{ for }  f, g \in \mathcal O(U).$$

Let $\tilde \gamma^1,\dots, \tilde \gamma^d$ be another set of coordinates on $U$, with
$$\tilde \gamma^i=f^i(\gamma^1,\dots, \gamma^d), \quad \gamma^i=g^i(\tilde \gamma^1,\dots, \tilde \gamma^d).$$
We have the following coordinate change equations:
\begin{align}\label{chi.coo}
	\partial \tilde \gamma^i(z)&=\sum_{j=1}^d:\frac{\partial f^i}{\partial \gamma^j}(z)\partial \gamma^j(z):\,, \nonumber \\
	\tilde b^i(z)&=\sum_{j=1}^d :\frac{\partial g^j}{\partial \tilde \gamma^i}(f(\gamma))(z)b^j(z): \nonumber\,, \\
	\tilde c^i(z)&=\sum_{j=1}^d:\frac{\partial f^i}{\partial \gamma^j}(z)c^j(z):\,,\\
	\tilde \beta^i(z)&=\sum_{j=1}^d :\frac{\partial g^j}{\partial \tilde \gamma^i}(f(\gamma))(z)\beta^j(z):
	+\sum_{k=1}^d:(:\frac{\partial}{\partial \gamma^k}(\frac{\partial g^j}{\partial \tilde \gamma^i}(f(\gamma)))(z)c^k(z):)b^j(z):\,.\nonumber
\end{align}

\subsection{Global sections}
There are four sections $Q(z), L(z), J(z)$ and $G(z)$ from (\ref{eqn:globlesection0}) in $\Omega_X^{\text{ch}}(U)$. For a general complex manifold $X$,
$L(z)$ and $G(z)$ are globally defined and have the same form in any local coordinate system. The fields $Q(z)$ and $J(z)$ are globally defined if and only if the first Chern class $c_1(TX)$ vanishes, but their zero modes $Q_{(0)}$ and $J_{(0)}$, are always globally defined \cite{MSV}. The operators $L_{(1)}$ and $J_{(0)}$ give $\Omega_X^{\text{ch}}$ a $\mathbb Z_{\geq 0}\times\mathbb Z $-grading by conformal weight $k$ and degree $l$, respectively.
$$\Omega_X^{\text{ch}}=\bigoplus_{k,l} \Omega_X^{\text{ch}}[k,l].$$ Note that the zero mode $Q_{(0)}$ of $Q(z)$ is the chiral de Rham differential, and it preserves conformal weight and raises the degree by one.

If $X$ is a Calabi-Yau manifold with a nowhere vanishing holomorphic $d$-form $w_0$, let $(U,\gamma_1,\dots,\gamma_d)$ be a coordinate system on $X$ such that locally,
$$w_0|_U=d\gamma^1\cdots d\gamma^d.$$
The eight sections $Q(z), L(z), J(z), G(z),B(z),C(z),D(z)$ and $E(z)$
from \eqref{eqn:globlesection0} and \eqref{eqn:globlesection1} in $\Omega_X^{\text{ch}}(U)$ are globally defined on $X$ \cite{EHKZ}.

If $X$ is a hyperk\"ahler manifold with holomorphic symplectic form $w_1$, let $(U,\gamma_1,\dots,\gamma_d)$ be a coordinate system on $X$ such that
locally, $$w_1|_U=\sum_{i=1}^{\frac d 2} d\gamma^{2i-1}\wedge d\gamma^{2i}.$$ Then
the eight sections $Q(z), L(z), J(z), G(z),B'(z),C'(z),D'(z)$ and $E'(z)$
from \eqref{eqn:globlesection0} and \eqref{eqn:globlesection2} in $\Omega_X^{\text{ch}}(U)$ are globally defined on $X$ \cite{BHS}.
\begin{defn} If $X$ is a Calabi-Yau manifold with a nowhere vanishing holomorphic $d$-form, let $\cA_0(X)$ be the vertex algebra which is strongly generated by the eight global sections given by $Q(z)$, $L(z)$, $J(z)$, $G(z)$, $B(z)$, $C(z)$, $D(z)$ and $E(z)$ on $X$.\\
	 If $X$ is a hyperk\"ahler manifold, let $\cA_1(X)$ be the vertex algebra which is strongly generated by the eight global sections given by $Q(z), L(z), J(z), G(z),B'(z),C'(z),D'(z)$ and $E'(z)$ on $X$.
\end{defn}

The following theorem was proven in \cite{S2}.
\begin{thm}\label{thm:global}If $X$ is a $d$-dimensional compact K\"ahler manifold with holonomy group $G=SU(d)$ and $w_0$ is a nowhere vanishing holomorphic $d$-form, then 
	$$\Gamma (X,\Omega_X^{\text{ch}})\cong \cW_+( T_xX)^{\text{Vect}(T_xX, w_0|_x)}.$$
	If $X$ is a $d$-dimensional compact K\"ahler manifold with holonomy group $G=Sp(\frac d 2)$ and $w_1$ is a holomorphic symplectic form, then the space of global section of $\Omega_X^{\text{ch}}$ 
	$$\Gamma (X,\Omega_X^{\text{ch}})\cong  \cW_+(T_xX)^{\text{Vect}( T_xX,w_1|_x)}.$$
\end{thm}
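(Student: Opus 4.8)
The plan is to pass from the sheaf $\Omega_X^{\text{ch}}$ to a family of finite-rank holomorphic vector bundles via the conformal-weight and fermion-number gradings, to compute their spaces of holomorphic sections by a Bochner-type principle resting on Ricci-flatness, and finally to identify these global sections with the fiberwise Cartan-type invariants $\cW_+(T_xX)^{\text{Vect}(T_xX,\omega|_x)}$, where $\omega$ denotes $w_0$ in the $SU(d)$ case and $w_1$ in the $Sp(\frac{d}{2})$ case. Throughout I would treat the coordinate-change formulas \eqref{chi.coo} as the gluing cocycle of $\Omega_X^{\text{ch}}$ and the map $\mathcal L$ of \eqref{eqn:actionL} as its infinitesimal counterpart, so that the symmetry constraining a section is the jet group of holomorphic coordinate changes, with Lie algebra $\text{Vect}(T_xX)$.

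First I would decompose $\Omega_X^{\text{ch}}=\bigoplus_{k,l}\Omega_X^{\text{ch}}[k,l]$ using the gradings induced by $L_{(1)}$ and $J_{(0)}$. Since the transformations \eqref{chi.coo} raise conformal weight only by a bounded amount, each $\Omega_X^{\text{ch}}[k,l]$ is the sheaf of holomorphic sections of a finite-rank holomorphic vector bundle $\mathcal E_{k,l}$ assembled functorially from the holomorphic tangent bundle $T^{1,0}X$. Concretely, the bosonic fields $\beta^i,\partial\gamma^i$ and the fermionic fields $b^i,c^i$ transform as sections of $T^{1,0}X$ and its dual $(T^{1,0}X)^*$, so the associated graded of $\mathcal E_{k,l}$ for the filtration by the number of $b$ and $c$ fields is a direct sum of tensor bundles built from $T^{1,0}X$ and $(T^{1,0}X)^*$ by symmetric and exterior powers. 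The inhomogeneous correction in the transformation of $\beta^i$ in \eqref{chi.coo} is precisely what keeps $\mathcal E_{k,l}$ from splitting as an honest tensor bundle, and it is this correction that will force the higher-degree invariance below.

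The geometric input is a Bochner-type principle: on a compact Ricci-flat K\"ahler manifold the Ricci term in the Weitzenb\"ock identity vanishes, so every holomorphic section of each tensor summand above is parallel, and filtering back up, every holomorphic section of $\mathcal E_{k,l}$ is parallel for the connection induced by the Ricci-flat metric. Hence $\Gamma(X,\Omega_X^{\text{ch}}[k,l])$ is identified with the space of parallel sections, each determined by its value in the fiber $\cW_+(T_xX)[k,l]$; in particular the topology of $X$ contributes nothing and no higher cohomology interferes. A parallel section is invariant under the holonomy group $G=SU(d)$ (respectively $Sp(\frac{d}{2})$), so its fiber value is annihilated by $\text{Vect}_0(T_xX,\omega|_x)$, the complexified holonomy algebra $\mathfrak{sl}_d$ (respectively $\mathfrak{sp}_d$).

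The \emph{main obstacle}, and the crux of the matter, is that a global section must satisfy more than this holonomy invariance: being a genuine section of $\Omega_X^{\text{ch}}$, it is compatible under the full gluing \eqref{chi.coo}, whose higher-jet terms---through the inhomogeneous $\beta$-transformation---impose invariance under the higher-degree pieces $\text{Vect}_n(T_xX,\omega|_x)$, $n\geq 1$, as well. To make this precise I would expand a global section in the $\pi$-coordinates of \eqref{eqn:isopi} by conformal weight and run the leading-term argument of Lemma \ref{lemma: ginvariant}: the parallel/holonomy statement gives that the leading term is $\text{Vect}_0(T_xX,\omega|_x)[t]$-invariant, and repeated application of the degree-one generator $x_1^2\frac{\partial}{\partial x_2}$ propagates invariance up the filtration, forcing full $\text{Vect}(T_xX,\omega|_x)$-invariance. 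Conversely, any vector in $\cW_+(T_xX)^{\text{Vect}(T_xX,\omega|_x)}$ satisfies exactly the compatibility demanded by \eqref{chi.coo} and, by the Bochner vanishing, extends without obstruction to a global section; this yields $\Gamma(X,\Omega_X^{\text{ch}})\cong\cW_+(T_xX)^{\text{Vect}(T_xX,\omega|_x)}$ in both cases. I expect the delicate points to be the construction of the filtration by the number of $b$ and $c$ fields that realizes the non-tensorial $\beta$-transformation as a lower-order correction, and the verification that this correction genuinely encodes the higher-degree generators of the Cartan-type series, so that jet-compatibility upgrades holonomy invariance to full $\text{Vect}(T_xX,\omega|_x)$-invariance.
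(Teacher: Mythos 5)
First, a point of comparison: the paper you are working from does not actually prove Theorem \ref{thm:global}; it is imported from \cite{S2} (``The following theorem was proven in \cite{S2}''), so your proposal must be measured against the argument of \cite{S2} and its companion \cite{S1}. Your outline does reproduce that strategy at the top level: decomposition by conformal weight and fermion number into finite-rank holomorphic bundles, a Bochner principle on compact Ricci-flat K\"ahler manifolds, and identification of global sections with invariants in a single fiber. But the two steps you yourself flag as the crux are genuinely gapped, not merely compressed.

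The first gap: the claim that ``filtering back up, every holomorphic section of $\mathcal E_{k,l}$ is parallel for the connection induced by the Ricci-flat metric'' is not justified. The bundle $\mathcal E_{k,l}$ is an iterated and generally non-split extension of tensor bundles; it is not a tensor functor of $T^{1,0}X$, so the Levi-Civita connection does not induce a connection on it in any evident way, and the Bochner principle applies only to the graded quotients. Passing from parallelness on the quotients to a meaningful statement about sections of $\mathcal E_{k,l}$ itself is precisely the technical heart of \cite{S1,S2} (the ``vector bundles induced from jet schemes'' machinery); asserting it is assuming the theorem's hardest part. The second gap is a reversal of logic: holonomy invariance of a parallel section gives only invariance under $\text{Vect}_0(T_xX,\omega|_x)=\mathfrak{sl}_d$ or $\mathfrak{sp}_d$ (the action of $\mathfrak g_0 t^0$), not the $\text{Vect}_0[t]$-invariance you claim, and Lemma \ref{lemma: ginvariant} cannot be run backwards to promote this to invariance under $\text{Vect}_n$, $n\geq 1$: that lemma \emph{assumes} full $\text{Vect}(V,\omega_i)$-invariance and \emph{deduces} $\mathfrak g_0[t]$-invariance of the leading term. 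Indeed the promotion is false for a bare fiber element --- an element of $SW_k(V)^{\mathfrak g_0[t]}$ is the leading term of an invariant element of $\cW(V)$ only after adding lower-order corrections, which is the whole content of the induction in the proof of Theorem \ref{thm:invariant}. What actually forces the higher-degree invariance in \cite{S2} is not holonomy but the gluing structure itself: in the formal-geometry description of \cite{MS}, $\Omega^{\text{ch}}_X$ is associated to the bundle of $w_i$-adapted formal coordinate systems, whose structure group has Lie algebra $\bigoplus_{n\geq 0}\text{Vect}_n(T_xX,\omega|_x)$, so any section is automatically equivariant under that group, and the Bochner input supplies exactly the remaining degree $-1$ invariance (constancy as the base point moves). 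Your sketch never extracts this equivariance from \eqref{chi.coo}, so the upgrade from holonomy invariance to full Cartan-type invariance is left unproven.
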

Thus we have 
\begin{thm}\label{thm:globalfinal}
If $X$ is a $d$-dimensional compact K\"ahler manifold with holonomy group $G=SU(d)$, then
$$\cA_0(X)=\Gamma(X,\Omega_X^{\text{ch}})\cong \cA_0(\mathbb C^d).$$
If $X$ is a $d$-dimensional compact K\"ahler manifold with holonomy group $G=Sp(\frac d 2)$, then the eight global sections given by $Q(z), L(z), J(z), G(z),B'(z),C'(z),D'(z)$ and $E'(z)$ strongly generate
$$\cA_1(X)=\Gamma(X,\Omega_X^{\text{ch}})\cong\cA_1(\mathbb C^d).$$
\end{thm}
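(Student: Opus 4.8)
The plan is to combine the two preceding theorems—Theorem~\ref{thm:invariant} (the invariant-theoretic computation) and Theorem~\ref{thm:global} (the geometric identification of global sections)—essentially as a formal concatenation. Theorem~\ref{thm:global} tells us that for a compact K\"ahler $X$ with holonomy $SU(d)$, the global sections $\Gamma(X,\Omega_X^{\text{ch}})$ are isomorphic to $\cW_+(T_xX)^{\text{Vect}(T_xX, w_0|_x)}$, and similarly for holonomy $Sp(\frac d2)$ with $w_1$. Theorem~\ref{thm:invariant} identifies these invariant spaces: $\cW(V)^{\text{Vect}(V,\omega_0)}=\cA_0(V)$ and $\cW(V)^{\text{Vect}(V,\omega_1)}=\cA_1(V)$. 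So the core of the argument is just to chain these two isomorphisms together, taking $V=T_xX$ with $\omega_i = w_i|_x$.

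The first step is to reconcile the two ambient algebras appearing in the two theorems: Theorem~\ref{thm:global} produces invariants inside $\cW_+(V)$, whereas Theorem~\ref{thm:invariant} computes invariants inside the full $\cW(V)$. Here I would invoke Lemma~\ref{lemma: ginvariant}(1), which shows that any $\text{Vect}(V,\omega_i)$-invariant element automatically lies in $\cW_+(V)$ (because $\text{Vect}_{-1}(V,\omega_i)=\text{Vect}_{-1}(V)$ forces $\gamma^{x_i}_{(-1)}$ not to appear). Hence $\cW(V)^{\text{Vect}(V,\omega_i)} = \cW_+(V)^{\text{Vect}(V,\omega_i)}$, and the two ambient algebras give the same invariant space. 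This lets me identify $\Gamma(X,\Omega_X^{\text{ch}}) \cong \cW_+(V)^{\text{Vect}(V,\omega_i)} = \cW(V)^{\text{Vect}(V,\omega_i)}$, which equals $\cA_i(V)$ by Theorem~\ref{thm:invariant}. Taking $V=\mathbb C^d$ via the linear isomorphism $\cW(\psi)$ (which respects the relevant structures) yields $\cA_i(\mathbb C^d)$.

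Next I would match this against the vertex algebras $\cV_0(X)$ and $\cV_1(X)$ of the definition. These are by construction the subalgebras of $\Gamma(X,\Omega_X^{\text{ch}})$ strongly generated by the eight globally-defined sections. Under the isomorphism of Theorem~\ref{thm:global}, these eight sections correspond precisely to the eight generators of $\cA_0(V)$ (resp.\ $\cA_1(V)$) listed in equations~\eqref{eqn:globlesection0} and~\eqref{eqn:globlesection1} (resp.\ \eqref{eqn:globlesection2}), which—as remarked after the definition of $\cA_i(V)$—strongly generate $\cA_i(V)$. Therefore $\cV_i(X)$ maps isomorphically onto $\cA_i(\mathbb C^d)$, and since we have just shown $\Gamma(X,\Omega_X^{\text{ch}})\cong \cA_i(\mathbb C^d)$ as well, the inclusion $\cV_i(X)\subseteq \Gamma(X,\Omega_X^{\text{ch}})$ must be an equality. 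This simultaneously gives the strong-generation statement in the $Sp$ case.

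\emph{The hard part} here is conceptual rather than computational: the entire theorem is a packaging of results already established, so there is essentially no new obstacle—all the genuine difficulty was absorbed into Theorem~\ref{thm:invariant} (which rests on the arc-space invariant theory of \cite{LSII,LSIII} via Theorem~\ref{oddgen}) and into the geometric Theorem~\ref{thm:global} from \cite{S2}. The one point demanding care is verifying that the globally-defined sections on $X$ are correctly identified, under the isomorphism of Theorem~\ref{thm:global} and the coordinate-change formulas~\eqref{chi.coo}, with the specific generators \eqref{eqn:globlesection0}--\eqref{eqn:globlesection2} of $\cA_i(V)$; this is what justifies the identification $\cV_i(X)\cong\cA_i(\mathbb C^d)$ and uses that the holonomy condition guarantees the relevant covariantly-constant form ($w_0$ or $w_1$) exists globally, so these sections patch.
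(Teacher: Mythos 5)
Your proposal is correct and follows essentially the same route as the paper: chain Theorem~\ref{thm:global} with Theorem~\ref{thm:invariant}, and observe that the isomorphism carries the eight globally-defined sections to the strong generators of $\cA_i(\mathbb C^d)$, forcing $\cV_i(X)=\Gamma(X,\Omega_X^{\text{ch}})$. Your explicit reconciliation of $\cW_+(V)$ versus $\cW(V)$ invariants via Lemma~\ref{lemma: ginvariant}(1) is a point the paper's proof passes over silently, but it is the same argument.
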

\begin{proof} If $X$ is a $d$-dimensional compact K\"ahler manifold with holonomy group $G=SU(d)$, there must be a nowhere vanishing holomorphic $d$-form $w_0$. By Theorem \ref{thm:global}, $\Gamma(X,\Omega_X^{\text{ch}})\cong \cW_+( T_xX)^{\text{Vect}(T_xX, w_0|_x)}$. By Theorem \ref{thm:invariant}, $\cW_+( T_xX)^{\text{Vect}(T_xX, w_0|_x)}$ is isomorphic to $\cA_0(\mathbb C^d)$.
So $\Gamma(X,\Omega_X^{\text{ch}})\cong \cA_0(\mathbb C^d).$ The isomorphism maps the global sections given by $Q(z)$, $L(z)$, $J(z)$, $G(z)$, $B(z)$, $C(z)$, $D(z)$ and $E(z)$  to $Q(z)$, $L(z)$, $J(z)$, $G(z)$, $B(z)$, $C(z)$, $D(z)$ and $E(z)$ themselves. So the eight global sections given by $Q(z)$, $L(z)$, $J(z)$, $G(z)$, $B(z)$, $C(z)$, $D(z)$ and $E(z)$ strongly generate $\Gamma(X,\Omega_X^{\text{ch}})\cong \cA_0(\mathbb C^d).$

Similarly, if $X$ is a $d$-dimensional compact K\"ahler manifold with holonomy group $G=Sp(\frac d s)$, there must be a holomorphic symplectic form $w_1$. Then by Theorem \ref{thm:global}, $\Gamma(X,\Omega_X^{\text{ch}})\cong \cW_+( T_xX)^{\text{Vect}(T_xX, w_0|_x)}$. By Theorem \ref{thm:invariant}, $\cW_+( T_xX)^{\text{Vect}(T_xX, w_0|_x)}$ is  isomorphic to $\cA_0(\mathbb C^d)$.
So $\Gamma(X,\Omega_X^{\text{ch}})\cong \cA_0(\mathbb C^d)$. The isomorphism maps the global sections given by $Q(z)$, $L(z)$, $J(z)$, $G(z)$, $B'(z)$, $C'(z)$, $D'(z)$ and $E'(z)$  to $Q(z)$, $L(z)$, $J(z)$, $G(z)$, $B'(z)$, $C'(z)$, $D'(z)$ and $E'(z)$ themselves. So the eight global sections given by $Q(z)$, $L(z)$, $J(z)$, $G(z)$, $B'(z)$, $C'(z)$, $D'(z)$ and $E'(z)$ strongly generate
$\Gamma(X,\Omega_X^{\text{ch}})\cong \cA_0(\mathbb C^d).$
\end{proof}

\subsection{Covering maps}
Let $X$ and $Y$ be compact complex manifolds and let $p: Y\to X$ be a covering map. 
By the definition of chiral de Rham complex, the inverse image sheaf $p^{-1} \Omega_X^{\text{ch}}=\Omega_Y^{\text{ch}}$. A global section of $\Omega_X^{\text{ch}}$ pulls back to a global section of $\Omega_Y^{\text{ch}}$.
Let $$p^*:
\Gamma(X,\Omega_X^{\text{ch}}) \to \Gamma(Y,\Omega_Y^{\text{ch}}) $$ be the pullback map.  If $p$ is an isomorphism, then  $p^*$ is clearly an isomorphism.

If $p$ is a finite normal covering map, let $G(Y,p)$ be its covering transformation group. 
For any $g\in G(y,p)$, the action of $g$ on $Y$, $\rho(g):Y\to Y$ induces an automorphism  $\rho(g)^*: \Gamma(Y,\Omega_Y^{\text{ch}}) \to \Gamma(Y,\Omega_Y^{\text{ch}})$. Let 
$\Gamma(Y,\Omega_Y^{\text{ch}})^{G(Y,p)}$ be the invariant subalgebra under the induced action of $G(Y,p)$.
\begin{prop}\label{prop:cover}
	$p^*$ induces an isomorphism of vertex algebras $\Gamma(X,\Omega_X^{\text{ch}}) \to \Gamma(Y,\Omega_Y^{\text{ch}})^{G(Y,p)}.
	$ 
\end{prop}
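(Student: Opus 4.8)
The plan is to prove Proposition \ref{prop:cover} by the standard descent argument for finite normal (Galois) coverings, adapted to the sheaf of vertex algebras $\Omega_X^{\text{ch}}$. I will establish three things in turn: that $p^*$ is a vertex algebra homomorphism whose image is contained in the $G(Y,p)$-invariants; that $p^*$ is injective; and that every invariant global section of $\Omega_Y^{\text{ch}}$ descends to $X$.

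First, because every covering transformation satisfies $p\circ \rho(g) = p$, contravariance of pullback gives $\rho(g)^*\circ p^* = (p\circ \rho(g))^* = p^*$, so $\rho(g)^*(p^*(s)) = p^*(s)$ for all $s\in \Gamma(X,\Omega_X^{\text{ch}})$ and all $g\in G(Y,p)$. Hence $p^*$ lands in $\Gamma(Y,\Omega_Y^{\text{ch}})^{G(Y,p)}$. That $p^*$ is a homomorphism of vertex algebras is immediate from $p^{-1}\Omega_X^{\text{ch}} = \Omega_Y^{\text{ch}}$, since the vertex operations are local and $p$ is a local biholomorphism. Injectivity is equally formal: $p$ is surjective and a local biholomorphism, so if $p^*(s)$ vanishes then $s$ vanishes in a neighborhood of every point of $X$, whence $s=0$.

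The heart of the argument is surjectivity onto the invariants. Given $t\in \Gamma(Y,\Omega_Y^{\text{ch}})^{G(Y,p)}$, I cover $X$ by connected, evenly covered open sets $U$, so that $p^{-1}(U) = \bigsqcup_i V_i$ with each $p|_{V_i}:V_i\to U$ a biholomorphism. Using the coordinate-change equations \eqref{chi.coo} that govern how $\Omega^{\text{ch}}$ transforms under a biholomorphism, the local inverse $(p|_{V_i})^{-1}$ transports the restriction $t|_{V_i}$ to a section $s_i^U\in \Omega_X^{\text{ch}}(U)$. Because the covering is normal, $G(Y,p)$ permutes the sheets $\{V_i\}$ transitively, and the invariance $\rho(g)^* t = t$ together with $p\circ \rho(g) = p$ forces $s_i^U = s_j^U$ for all $i,j$; call the common value $s^U$. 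On overlaps $U\cap U'$ the sections $s^U$ and $s^{U'}$ are each the transport of $t$ along a single sheet, so they agree, and the $s^U$ glue to a global section $s\in \Gamma(X,\Omega_X^{\text{ch}})$ with $p^*(s)=t$. The descent map $t\mapsto s$ is inverse to $p^*$ and is itself a vertex algebra homomorphism, being built from the local biholomorphism identifications \eqref{chi.coo}; hence $p^*$ is an isomorphism of vertex algebras onto the invariants.

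I expect the surjectivity step to be the main obstacle: one must verify that normality makes the per-sheet transports $s_i^U$ independent of the sheet, and that the resulting local sections are compatible on overlaps, i.e.\ that $G(Y,p)$-invariance is precisely the descent datum needed. The cleanest way to package this is to prove the sheaf-level statement that the natural map $\Omega_X^{\text{ch}} \to (p_*\Omega_Y^{\text{ch}})^{G(Y,p)}$ is an isomorphism by checking it on evenly covered connected neighborhoods, and then pass to global sections, using finiteness of $p$ to identify $\Gamma(X, p_*\Omega_Y^{\text{ch}})$ with $\Gamma(Y,\Omega_Y^{\text{ch}})$.
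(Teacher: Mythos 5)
Your proposal is correct and follows essentially the same descent argument as the paper: pull back along the local inverses $(p|_{V_i})^{-1}$ over an evenly covered open cover of $X$, glue the resulting local sections, and use $p\circ\rho(g)=p$ for the inclusion of the image in the invariants. The only real difference is cosmetic: the paper defines the local section on $U_\alpha$ as the average $\frac{1}{n}\sum_{i}\bigl((p|_{V_{\alpha,i}})^{-1}\bigr)^*(\tilde a|_{V_{\alpha,i}})$, so that overlap compatibility is automatic and $G(Y,p)$-invariance enters only in verifying $p^*(a)=\tilde a$, whereas you invoke normality (transitivity of the deck group on the sheets over a connected evenly covered set) to show all single-sheet transports coincide; the two devices are interchangeable here.
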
 
\begin{proof} Obviously, $p^*$ is an injective morphism of vertex algebras. For any $g\in G(Y,p)$, $p\circ \rho(g)=p$. So $\rho(g)^*\circ p^*=p^*$. For any section $a\in \Gamma(X,\Omega_X^{\text{ch}})$,  $\rho(g)^*( p^*(a))=p^*(a)$, so $p^*(a)$ is $G(Y,p)$-invariant.
	
On the other hand, assume $p$ is an $n$-sheet covering map. There is an open cover $\{U_\alpha\}$ of $X$ such that each $p^{-1}(U_\alpha)$ is the disjoint union of open sets  $V_{\alpha,i}$ in $Y$, and $p|_{V_{\alpha,i}}:V_{\alpha,i}\to U_\alpha$ is an isomorphism. Let $\tilde a \in \Gamma(Y,\Omega_Y^{\text{ch}})^{G(Y,p)}$, and define $$a_\alpha=\frac 1 n \sum_{i=1}^n ((p|_{V_{\alpha,i}})^{-1})^* (\tilde a|_{V_{\alpha,i}}).$$ For another open set $U_\beta$ in the open cover, it is easy to see $a_\alpha|_{U_\alpha\cap U_\beta}=a_\beta|_{U_\alpha\cap U_\beta}$, so there is  an $a\in \Gamma(X,\Omega_X^{\text{ch}})$ with $a|_{U_\alpha}=a_\alpha$. It is easy to see that $p^*(a)=\tilde a$, since $\tilde a$ is $G(Y,p)$-invariant.
\end{proof}

\subsection{Global sections: general case}
For a compact Ricci-flat K\"ahler manifold, we have the following properties (Proposition 6.22, 6.23 in \cite{J}).
\begin{prop}\label{prop:ricciflat1}
	Let $X$ be a compact Ricci-flat K\"ahler manifold. Then $X$ admits a finite cover isomorphic to the product K\"ahler manifold
	$T^{2l}\times X_1\times X_2\cdots\times X_k$, where $T^{2l}$ is a flat K\"ahler torus and $X_j$ is a compact, simply connected, irreducible, Ricci-flat K\"ahler manifold for $j=1,\dots,k$.
\end{prop}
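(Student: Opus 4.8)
This statement is the Beauville--Bogomolov type structure theorem for compact Ricci-flat K\"ahler manifolds, quoted here from \cite{J}; I indicate the standard argument. The plan is to combine the Cheeger--Gromoll splitting theorem, which produces the flat torus factor, with the de Rham holonomy decomposition, which produces the irreducible factors, and to use that the parallel complex structure is holonomy-invariant to ensure that every factor is again K\"ahler.

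First I would recall that $X$ carries a Ricci-flat K\"ahler metric $g$, so in particular $\Ric(g) \geq 0$. By the Cheeger--Gromoll splitting theorem for compact manifolds of nonnegative Ricci curvature, $X$ admits a finite normal cover that is isometric to a Riemannian product $F \times N$, where $F$ is a flat torus and $N$ is compact and simply connected; equivalently, the universal cover splits isometrically as $\R^{m} \times \tilde N$ with $\tilde N$ compact simply connected, the Euclidean factor being the maximal flat de Rham factor.

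Next I would bring in the complex structure. Since $g$ is K\"ahler, its complex structure $J$ is parallel, hence invariant under the holonomy group $\Hol(g) \subset U(n)$. Consequently $J$ preserves the holonomy-invariant splitting of the tangent bundle underlying the de Rham decomposition: the flat distribution is $J$-stable, forcing $m = 2l$ even and identifying $F$ with a complex torus $T^{2l}$, and the orthogonal complement decomposes $J$-invariantly. Applying the de Rham decomposition theorem to $\tilde N$ then gives a product $\tilde N \cong X_1 \times \cdots \times X_k$ of simply connected irreducible Riemannian manifolds, each of which inherits a parallel complex structure and a Ricci-flat K\"ahler metric, and is compact because $\tilde N$ is. Thus the finite cover is K\"ahler-isometric to $T^{2l} \times X_1 \times \cdots \times X_k$ with the asserted properties of the factors.

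The step I expect to be the main obstacle is precisely this compatibility between the Riemannian (Cheeger--Gromoll and de Rham) decompositions and the complex geometry: one must verify that the metric splitting is actually a splitting of K\"ahler manifolds, i.e.\ that the parallel complex structure respects each factor, that the flat factor is a complex torus, and that the passage to the finite cover can be arranged so that the deck group preserves the factors rather than merely permuting isometric ones. All of this is carried out in \cite{J}; by Berger's holonomy classification together with Ricci-flatness, each irreducible factor $X_j$ then has holonomy $SU$ or $Sp$, which is the input used in Theorem \ref{thm:globalfinal}.
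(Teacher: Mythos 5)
The paper offers no proof of this proposition: it is quoted directly from Joyce (Propositions 6.22 and 6.23 of \cite{J}), exactly as you recognize, so the only thing to compare your sketch against is Joyce's own argument. Your outline --- the Cheeger--Gromoll structure theorem producing the flat torus factor and the compact simply connected complement, then the de Rham decomposition together with the observation that the parallel complex structure preserves the flat distribution and each irreducible factor --- is the standard proof of that result and is correct, including your identification of the factor-compatibility of $J$ as the delicate point. One small caution: you credit Cheeger--Gromoll with producing a finite \emph{normal} cover, but the paper does not take normality for granted --- it is established separately in Proposition \ref{prop:ricciflat3} via Lemma \ref{lem:normalgroup} --- and since Proposition \ref{prop:ricciflat1} asserts only a finite cover, this over-claim does not affect your argument here.
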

\begin{prop}\label{prop:ricciflat2}
	Let $X$ be a compact, simply-connected, irreducible, Ricci-flat K\"ahler manifold of dimension $d$. Then either $d\geq 2$ and its holonomy group is $SU(d)$, or $d\geq 4$ is even and its holonomy group is $Sp(\frac d 2)$. Conversely, if $X$ is a compact K\"ahler manifold and its holonomy group is $SU(d)$ or $Sp(\frac d 2)$, then $X$ is Ricci-flat and irreducible and $X$ has finite fundamental group.
\end{prop}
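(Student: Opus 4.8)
The plan is to derive this from two deep external inputs---Berger's classification of Riemannian holonomy groups and the correspondence between Ricci curvature and holonomy on K\"ahler manifolds (both available in \cite{J})---and then to assemble the statement, paying particular attention to the low-dimensional coincidences and to the finiteness of $\pi_1$.

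For the forward direction I would first recall that a K\"ahler metric has holonomy contained in $U(d)$, and that the Ricci form of a K\"ahler manifold is exactly the curvature of the canonical line bundle $\wedge^d T^{1,0}X$, i.e. the curvature induced on the determinant of the holonomy representation. Hence $X$ Ricci-flat forces the restricted holonomy into $SU(d)$; since $X$ is simply connected the full holonomy group equals the restricted (connected) holonomy group, so the holonomy group $H$ satisfies $H\subseteq SU(d)$ and acts on $T_xX\cong\mathbb{C}^d$, irreducibly by hypothesis. Next I would apply Berger's theorem, but first dispose of the locally symmetric case: an irreducible symmetric space is Einstein, and a Ricci-flat Einstein symmetric space is flat, hence has reducible holonomy for $d\geq 1$, contradicting irreducibility. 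So $X$ is non-symmetric and Berger's list applies. The only entries of that list whose holonomy lies in $SU(d)$ are $H=SU(d)$ and, when $d=2l$ is even, $H=Sp(l)=Sp(\frac d2)$ (indeed $Sp(l)\subset SU(2l)$). The stated dimension bounds then come from the degenerate coincidences: $SU(1)$ is trivial, which would force $X$ flat and hence not a compact simply connected manifold of positive dimension, so $d\geq 2$; and $Sp(1)=SU(2)$, so a genuinely hyperk\"ahler holonomy requires $l\geq 2$, i.e. $d\geq 4$.

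For the converse, suppose $X$ is compact K\"ahler with holonomy group $SU(d)$ or $Sp(\frac d2)$. Both groups lie in $SU(d)\subset U(d)$, so the induced connection on the canonical bundle is flat and the Ricci form vanishes; thus $X$ is Ricci-flat, and irreducibility is immediate since the defining representation of each group on $\mathbb{C}^d$ is irreducible. To see that $\pi_1(X)$ is finite, I would pass to the universal Riemannian cover $\tilde X$, which is complete and Ricci-flat, and apply the Cheeger--Gromoll splitting theorem to write $\tilde X\cong\mathbb{R}^k\times N$ isometrically with $N$ compact and simply connected. A Euclidean factor contributes a trivial subrepresentation to the holonomy; but the restricted holonomy of $X$ equals $SU(d)$ or $Sp(\frac d2)$, neither of which fixes a nonzero vector, so $k=0$ and $\tilde X=N$ is compact. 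Hence $\pi_1(X)$ is finite.

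The main obstacle is that the forward direction is not self-contained: it rests essentially on Berger's classification, which I would cite rather than reprove. Granting that, the only genuinely delicate bookkeeping is the elimination of the locally symmetric case and the correct treatment of the low-rank coincidences ($SU(1)$ trivial, $Sp(1)=SU(2)$) that yield the bounds $d\geq 2$ and $d\geq 4$. In the converse, the one substantive point is the finiteness of $\pi_1$, which is cleanest via the splitting theorem once one notes that full $SU(d)$ or $Sp(\frac d2)$ holonomy admits no trivial summand.
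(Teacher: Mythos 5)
Your proposal is correct, but there is nothing internal to compare it against: the paper does not prove this proposition at all. It is quoted verbatim from Joyce's book (Propositions 6.22 and 6.23 of \cite{J}), and the paper's ``proof'' is that citation. What you have written is, in essence, a reconstruction of the standard argument given in that reference: Ricci-flatness forces the restricted holonomy into $SU(d)$ via the canonical bundle, simple connectedness identifies restricted with full holonomy, the locally symmetric case is eliminated because an irreducible Ricci-flat symmetric space is flat, Berger's list then leaves only $SU(d)$ and $Sp(\frac d2)$, and the coincidences $SU(1)=\{1\}$ and $Sp(1)=SU(2)$ give the bounds $d\geq 2$ and $d\geq 4$; the converse uses flatness of the induced connection on the canonical bundle together with the Cheeger--Gromoll structure theorem (no Euclidean factor can split off since neither group fixes a nonzero tangent vector, hence the universal cover is compact and $\pi_1$ is finite). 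All of these steps are sound. Two small points worth tightening if you were to write this out in full: irreducibility in the hypothesis and conclusion is irreducibility of the holonomy representation on the \emph{real} tangent space, so in the converse you should note that the standard representations of $SU(d)$ ($d\geq 2$) and $Sp(\frac d2)$ on $\mathbb{R}^{2d}$ are irreducible as real representations (true, but not quite the same as complex irreducibility); and in the forward direction one should say explicitly that a compact, simply connected, locally symmetric space is globally symmetric before invoking the symmetric-space dichotomy. Since the result is classical and deep (Berger plus Cheeger--Gromoll), citing \cite{J} as the paper does, rather than reproving it, is the appropriate choice in context; your sketch simply makes the content of that citation explicit.
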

\begin{lemma}\label{lem:normalgroup} Let $G$ be a group and let $N\cong \mathbb Z^k$ be a subgroup with finite index in $G$. Then there is a subgroup $M$ of $N$ such that the index of $M$ in $N$ is finite, and $M$ is a normal subgroup of $G$.
\end{lemma}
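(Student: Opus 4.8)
The plan is to produce the desired finite-index normal subgroup $M$ of $G$ by a standard "core of a finite-index subgroup" argument, applied carefully to the free abelian group $N\cong\mathbb{Z}^k$. First I would observe that since $N$ has finite index in $G$, there are only finitely many left cosets $g_1 N,\dots, g_m N$, and $G$ acts on the coset space $G/N$ by left multiplication. This gives a homomorphism $\varphi: G\to \mathrm{Sym}(G/N)$ whose kernel $K=\bigcap_{g\in G} gNg^{-1}$ is the normal core of $N$ in $G$. The core $K$ is a normal subgroup of $G$ contained in $N$, and since $K=\ker\varphi$ with $\mathrm{Sym}(G/N)$ finite, $K$ has finite index in $G$, hence finite index in $N$ as well.

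The one point requiring care is that the core $K$ is defined as an intersection over \emph{all} $g\in G$, which is a priori an infinite intersection and so it is not immediately clear that $K$ is isomorphic to $\mathbb{Z}^k$ or even nontrivial; this is where I expect the main (albeit minor) obstacle to lie. I would resolve it by noting that the conjugates $gNg^{-1}$ depend only on the coset $gN$ of $g$ modulo the normalizer of $N$, and in fact only on the image of $g$ in the finite group $G/K$ via $\varphi$; equivalently, since $N$ has finite index, there are only finitely many distinct conjugates $g_i N g_i^{-1}$ among the finitely many coset representatives, so $K$ is a \emph{finite} intersection of finite-index subgroups. A finite intersection of finite-index subgroups has finite index, which re-confirms $[G:K]<\infty$ directly and avoids appealing to the size of $\mathrm{Sym}(G/N)$.

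Setting $M=K$, it remains to check the three required properties. Normality of $M$ in $G$ is automatic, since the core $K=\bigcap_{g\in G}gNg^{-1}$ is conjugation-invariant. The containment $M\subseteq N$ holds because $K\subseteq eNe^{-1}=N$. Finally, $[N:M]$ is finite because $[G:M]$ is finite and $M\subseteq N\subseteq G$, so $[N:M]$ divides $[G:M]$; in particular $M$ is a finite-index subgroup of $N\cong\mathbb{Z}^k$, and (although the lemma does not require it) $M$ is itself free abelian of rank $k$. This completes the construction of $M$ and establishes all the claimed properties.
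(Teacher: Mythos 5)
Your proof is correct, and it constructs exactly the same subgroup as the paper does, namely the normal core $M=\bigcap_{g\in G}gNg^{-1}$ of $N$ in $G$; the two arguments differ only in how finiteness of the index is established. You use the standard permutation-representation argument: $M$ is the kernel of $G\to \mathrm{Sym}(G/N)$, so $[G:M]\leq [G:N]!$ is finite and hence so is $[N:M]$, with Poincar\'e's observation on finite intersections of finite-index subgroups as a backup. Notably, your argument never uses the hypothesis $N\cong\mathbb{Z}^k$, so it proves the stronger statement that the normal core of \emph{any} finite-index subgroup is a finite-index normal subgroup. The paper instead argues by hand and genuinely uses the free abelian structure: it lists the (finitely many) conjugates $N_1,\dots,N_l$, notes that for each $g\in G$ some power $g^{m_i}$ lies in $N_i$, hence $g^{m_1\cdots m_l}\in M$; applying this to a basis $g_1,\dots,g_k$ of $N$ produces powers $g_i^{m^i}\in M$, and since $N$ is free abelian the subgroup these powers generate has index $m^1\cdots m^k$ in $N$, which bounds $[N:M]$. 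That route needs commutativity (in a general group, a subgroup containing fixed powers of a generating set can have infinite index, e.g.\ $\langle a^2,b^2\rangle$ in the free group on $a,b$), and it also silently assumes that $N$ has only finitely many conjugates --- a point you address explicitly by noting that $gNg^{-1}$ depends only on the coset $gN$, and which your kernel formulation bypasses entirely. In short, your route is more standard and more general; the paper's is more elementary in its tools and yields an explicit bound on $[N:M]$ in terms of the exponents $m^i$, at the cost of invoking the $\mathbb{Z}^k$ hypothesis and leaving the finiteness of the set of conjugates implicit.
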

\begin{proof} Let $N_1, \dots, N_l$ be all of the conjugate subgroups of $N$ in $G$, and let $M=\cap N_i$, so that $M$ is a normal subgroup of $G$. Since the index of $N_i$ in $G$ is finite, for any $g \in G$, there is an integer $m_i>0$ such that $g^{m_i}\in N_i$. Let $m=m_1\cdots m_l$. Then $g^m\in N_i$ for all $1\leq i\leq l$, so that $g^m \in M$. If $g_1,\dots, g_k$ are generators of $N$, there exist positive integers $m^1,\dots, m^k$ such that $g^{m^i}_i\in M$. Since $N$ is a free abelian group, the index of $M$ in $N$ is no more than $m^1\cdots m^k$. \end{proof}
	
\begin{prop}\label{prop:ricciflat3}
	The finite covering map in Proposition \ref{prop:ricciflat1} can be chosen to be a normal covering map.
\end{prop}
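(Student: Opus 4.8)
The plan is to pass from covering maps to fundamental groups and then to invoke the purely group-theoretic Lemma \ref{lem:normalgroup}. Write $p\colon Y\to X$ for the finite cover furnished by Proposition \ref{prop:ricciflat1}, where $Y=T^{2l}\times X_1\times\cdots\times X_k$ and each $X_j$ is compact, simply connected, irreducible and Ricci-flat. Set $G=\pi_1(X)$ and $N=p_*(\pi_1(Y))\subseteq G$. Since the map induced by a covering is injective and each $X_j$ is simply connected, $N\cong\pi_1(Y)\cong\pi_1(T^{2l})\cong\mathbb{Z}^{2l}$; and because $p$ is a finite covering, $N$ has finite index in $G$. Thus $N$ is a finite-index subgroup of $G$ isomorphic to $\mathbb{Z}^{2l}$, which is exactly the hypothesis of Lemma \ref{lem:normalgroup} with $k=2l$.

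First I would apply Lemma \ref{lem:normalgroup} to produce a subgroup $M\subseteq N$ of finite index in $N$ that is normal in $G$. Let $q\colon\tilde Y\to X$ be the connected covering corresponding to $M$. Since $[G:M]=[G:N]\,[N:M]$ is finite and $M$ is normal in $G$, the covering $q$ is a finite normal covering of $X$. Moreover, as $M\subseteq N$, the covering $q$ factors as $\tilde Y\xrightarrow{\,r\,}Y\xrightarrow{\,p\,}X$, where $r$ is the finite covering of $Y$ determined by $M\subseteq N=\pi_1(Y)$.

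It then remains to check that $\tilde Y$ again has the product form required by Proposition \ref{prop:ricciflat1}. This is where the fact that all of $\pi_1(Y)$ is carried by the torus factor is used: the universal cover of $Y$ is $\mathbb{R}^{2l}\times X_1\times\cdots\times X_k$, on which $\pi_1(Y)\cong\mathbb{Z}^{2l}$ acts by deck transformations only on the $\mathbb{R}^{2l}$ factor. Consequently the intermediate cover determined by $M\subseteq\mathbb{Z}^{2l}$ is $\tilde Y\cong(\mathbb{R}^{2l}/M)\times X_1\times\cdots\times X_k$. As $M$ has finite index in the lattice $\mathbb{Z}^{2l}$, it is itself a lattice of full rank, so $\tilde{T}^{2l}:=\mathbb{R}^{2l}/M$ is again a flat K\"ahler torus. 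Therefore $\tilde Y\cong\tilde{T}^{2l}\times X_1\times\cdots\times X_k$ is of the desired product form, and $q\colon\tilde Y\to X$ is the sought finite normal covering.

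Everything up to this last point is routine bookkeeping in the dictionary between connected covers of $X$ and subgroups of $G$. The only step carrying real content is the verification in the final paragraph that replacing $N$ by the normal subgroup $M$ alters only the torus factor while leaving the simply connected, irreducible pieces $X_1,\dots,X_k$ unchanged; I expect this to be the step requiring the most care, since it is precisely what guarantees that the normalized cover is still a product of a flat torus with the irreducible Ricci-flat factors.
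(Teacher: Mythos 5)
Your proof is correct and follows essentially the same route as the paper: pass to fundamental groups, apply Lemma \ref{lem:normalgroup} to replace $p_*(\pi_1(Y))$ by a finite-index subgroup $M$ normal in $\pi_1(X)$, and realize the corresponding cover as a product of a (possibly different) flat K\"ahler torus with the same factors $X_1,\dots,X_k$. Your final paragraph merely spells out, via the universal cover $\mathbb{R}^{2l}\times X_1\times\cdots\times X_k$, what the paper asserts tersely when it says the intermediate cover $p_1\colon Y\to Y$ is induced by a covering $T^{2l}\to T^{2l}$.
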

\begin{proof}
	Let $Y=T^{2l}\times X_1\times X_2\cdots\times X_k$ be the finite cover in Proposition \ref{prop:ricciflat1}, and let  $p: Y\to X$ be the covering map. It induces an injective morphism of fundamental groups $p_*: \pi_1(Y,y)\to \pi_1(X, x)$ for $x=p(y)$. Since each $X_i$ is simply connected,  $\pi_1(Y)\cong\pi_1(T^{2l}) \cong\mathbb Z^{2l}$. Since $p$ is a finite covering map, the index of $p_*(\pi_1(Y,y))$ in $\pi_1(X,x)$ is finite. By Lemma \ref{lem:normalgroup}, there is a finite index subgroup $M$ of $\pi_1(Y,y)$, such that $p_*(M)$ is a normal subgroup of $\pi_1(X,x)$. We have a covering map $p_1: Y\to Y$ (given by the covering map $T^{2l}\to T^{2l}$) with $p_{1*}(\pi_1(Y,y_1))=M\subset \pi_1(Y,y)$ for some $y_1\in p_1^{-1}(y)$.
Then the covering map 
$p\circ p_1: Y\to X$ is a finite normal covering map since $p_*\circ p_{1*}(\pi_1(Y,y_1))=p_*(M)$ is a normal subgroup of $\pi_1(X,x)$ with finite index.
\end{proof}

\begin{thm}\label{thm:globalfinal2}
Let $X$ be a compact Ricci-flat K\"ahler manifold. Let $Y=T^{2l}\times X_1\times X_2\cdots\times X_k$ be the finite cover of $X$ in Proposition \ref{prop:ricciflat1}. 
Let $p:Y\to X$ be the finite normal covering map in Proposition \ref{prop:ricciflat3}, and let $G(Y,p)$ be the covering transformation group. Then $$\Gamma(X,\Omega_X^{\text{ch}})\cong (\Gamma(T^{2l},\Omega_{T^{2l}}^{\text{ch}})\bigotimes(\otimes_{i=1}^n\cA_0(X_i))\bigotimes (\otimes_{i=n+1}^k\cA_1(X_i)))^{G(Y,p)}$$
through $p^*$.

\end{thm}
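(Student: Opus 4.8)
The plan is to combine the covering-space reduction of Proposition~\ref{prop:cover} with a Künneth-type decomposition of the global sections over the product cover $Y$, and then to identify each tensor factor using Theorem~\ref{thm:globalfinal}. Since $p:Y\to X$ is a finite normal covering map with deck group $G(Y,p)$ (Proposition~\ref{prop:ricciflat3}), Proposition~\ref{prop:cover} already supplies a vertex algebra isomorphism
$$\Gamma(X,\Omega_X^{\text{ch}})\cong \Gamma(Y,\Omega_Y^{\text{ch}})^{G(Y,p)},$$
so the whole problem reduces to computing $\Gamma(Y,\Omega_Y^{\text{ch}})$ together with its $G(Y,p)$-action.

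The heart of the argument is a Künneth formula for global sections of the chiral de Rham complex on a product. First I would observe that for complex manifolds $Y_1,Y_2$, choosing an atlas of product charts $U_1\times U_2$ with product coordinate changes, the transformation formulas \eqref{chi.coo} split into the $Y_1$- and $Y_2$-blocks while the cross OPEs vanish; hence $\Omega^{\text{ch}}_{Y_1\times Y_2}\cong \Omega^{\text{ch}}_{Y_1}\boxtimes\Omega^{\text{ch}}_{Y_2}$ as sheaves of vertex algebras (external tensor product, with the tensor product OPE). Decomposing by conformal weight then gives, for each $k$, a finite direct sum
$$\Omega^{\text{ch}}_{Y_1\times Y_2}[k]\cong\bigoplus_{k_1+k_2=k}\Omega^{\text{ch}}_{Y_1}[k_1]\boxtimes\Omega^{\text{ch}}_{Y_2}[k_2]$$
of external products of coherent sheaves, since each weight piece of $\Omega^{\text{ch}}$ is a finite iterated extension of the locally free sheaves built from $T_{Y_j}$ and $\Omega^1_{Y_j}$, and is therefore coherent. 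I would then apply the $H^0$ Künneth isomorphism $H^0(Y_1\times Y_2,\mathcal F_1\boxtimes\mathcal F_2)\cong H^0(Y_1,\mathcal F_1)\otimes H^0(Y_2,\mathcal F_2)$ for coherent sheaves on compact complex manifolds in each weight, and check that the resulting identification intertwines the OPEs coming from the box-product structure. This yields $\Gamma(Y_1\times Y_2,\Omega^{\text{ch}}_{Y_1\times Y_2})\cong\Gamma(Y_1,\Omega^{\text{ch}}_{Y_1})\otimes\Gamma(Y_2,\Omega^{\text{ch}}_{Y_2})$ as vertex algebras, and iterating over the factors of $Y=T^{2l}\times X_1\times\cdots\times X_k$ gives
$$\Gamma(Y,\Omega^{\text{ch}}_Y)\cong \Gamma(T^{2l},\Omega^{\text{ch}}_{T^{2l}})\otimes\Gamma(X_1,\Omega^{\text{ch}}_{X_1})\otimes\cdots\otimes\Gamma(X_k,\Omega^{\text{ch}}_{X_k}).$$

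To finish, each $X_i$ is compact, simply connected, irreducible and Ricci-flat, so by Proposition~\ref{prop:ricciflat2} its holonomy is either $SU(d_i)$ or $Sp(d_i/2)$; Theorem~\ref{thm:globalfinal} then identifies $\Gamma(X_i,\Omega^{\text{ch}}_{X_i})$ with $\cV_0(X_i)$ in the first case and $\cV_1(X_i)$ in the second. After relabeling so that $i=1,\dots,n$ are the $SU$-factors and $i=n+1,\dots,k$ the $Sp$-factors, the tensor product above becomes exactly $\Gamma(T^{2l},\Omega^{\text{ch}}_{T^{2l}})\otimes(\otimes_{i=1}^n\cV_0(X_i))\otimes(\otimes_{i=n+1}^k\cV_1(X_i))$. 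It remains to handle the $G(Y,p)$-action: the deck group acts by holomorphic automorphisms of $Y$ and hence on $\Gamma(Y,\Omega^{\text{ch}}_Y)$, and since the Künneth isomorphism is canonical it is automatically equivariant by naturality — even when some $g\in G(Y,p)$ permutes isomorphic factors $X_i$ or acts nontrivially on $T^{2l}$, one simply transports the action across the isomorphism. Taking $G(Y,p)$-invariants on both sides and combining with the first displayed isomorphism gives the claim. I expect the main obstacle to be the Künneth step: one must verify both that the box-product decomposition genuinely holds at the level of the sheaf of vertex algebras and that the $H^0$-Künneth isomorphism respects the vertex algebra structure, the coherence and finiteness in each conformal weight being precisely what legitimize the external-product cohomology computation.
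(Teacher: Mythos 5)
Your proof is correct and follows essentially the same route as the paper's: reduce to the normal cover via Proposition \ref{prop:cover}, decompose $\Gamma(Y,\Omega^{\text{ch}}_Y)$ according to the product structure of $Y$, and identify each factor using Proposition \ref{prop:ricciflat2} and Theorem \ref{thm:globalfinal}, with the $G(Y,p)$-action transported tautologically across the isomorphism. The only difference is that the paper asserts the product (K\"unneth) decomposition of $\Gamma(Y,\Omega^{\text{ch}}_Y)$ without comment, whereas you justify it by the weight-wise splitting of $\Omega^{\text{ch}}_{Y_1\times Y_2}$ into external tensor products of locally free sheaves and the $H^0$ K\"unneth formula on compact complex manifolds --- a step the paper leaves implicit and that is indeed worth making explicit.
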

\begin{proof}
	Assume the dimension of $X_i$ is $d_i$.  By Proposition \ref{prop:ricciflat2}, we can assume  the holonomy group of $X_i$ is $SU(d_i)$ for $1\leq i\leq n$ and the holonomy group of $X_j$ is  $Sp(\frac {d_j} 2)$ for  $n<j\leq k$.
	By Theorem \ref{thm:globalfinal}, 
	$$\Gamma(Y,\Omega_Y^{\text{ch}})=\Gamma(T^{2l},\Omega_{T^{2l}}^{\text{ch}})\bigotimes(\otimes_{i=1}^n\cA_0(X_i))\bigotimes (\otimes_{i=n+1}^k\cA_1(X_i))$$
	By Proposition \ref{prop:cover}, $\Gamma(Y,\Omega_Y^{\text{ch}})\cong \Gamma(Y,\Omega_Y^{\text{ch}})^{G(Y,p)}$ through $p^*$. So $$\Gamma(X,\Omega_X^{\text{ch}})\cong (\Gamma(T^{2l},\Omega_{T^{2l}}^{\text{ch}})\bigotimes(\otimes_{i=1}^n\cA_0(X_i))\bigotimes (\otimes_{i=n+1}^k\cA_1(X_i)))^{G(Y,p)}$$
through $p^*$.	
\end{proof}
Since $\Gamma(T^{2l},\Omega_{T^{2l}}^{\text{ch}})\cong \cW_+(\mathbb C^{2l})$, the above theorem gives the space of global sections of chiral de Rham complex on compact Ricci-flat K\"ahler manifolds explicitly.

\section{Declarations}
The authors have no relevant financial or non-financial interests, conflicts, or competing interests to disclose. Data sharing is not applicable to this article as no datasets were generated or analyzed during the current study.

\end{document}